\definecolor{darkgreen}{rgb}{0,0.4,0}
\definecolor{BrickRed}{rgb}{0.65,0.08,0}
\newtheorem{theorem}{Theorem}[section]
\newenvironment{conjecture}[1]
  {\conj}
  {\endconj}
\newtheorem{proposition}[theorem]{Proposition}
\newtheorem{corollary}[theorem]{Corollary}
\newtheorem{lemma}[theorem]{Lemma}
\theoremstyle{remark}
\newtheorem*{remark}{Remark}
\numberwithin{equation}{section}
\newcommand{\dN}{\mathbb N}                             
\newcommand{\dZ}{\mathbb Z}                             
\newcommand{\dC}{\mathbb C}                             
\newcommand{\abs}[1]{\left\lvert#1\right\rvert}         
\newcommand{\comp}{\textrm{c}}                           
\DeclareRobustCommand{\TuDengConj}{\textup{TD}}
\DeclareRobustCommand{\CusickConj}{\textup{C}}
\DeclareRobustCommand{\CusickConjComp}{\textup{CC}}
\DeclareMathOperator{\dens}{\,\mathrm dens}            
\DeclareMathOperator{\LandauO}{\mathcal O}             
\begin{document}
\title[]%
{The Tu--Deng Conjecture holds almost surely}
\author{Lukas Spiegelhofer}
\author{Michael Wallner}
\address{Institut f\"ur diskrete Mathematik und Geometrie,
Technische Universit\"at Wien,
Wiedner Hauptstrasse 8--10, 1040 Wien, Austria}
\keywords{Tu--Deng Conjecture, Hamming weight, sum of digits, Cusick conjecture}
\subjclass[2010]{Primary: 11A63, 68R05, 11T71; Secondary: 05A20, 05A16}


\begin{abstract}
The Tu--Deng Conjecture is concerned with the sum of digits $w(n)$ of $n$ in base~$2$ (the Hamming weight of the binary expansion of $n$) and states the following:
assume that $k$ is a positive integer and $t\in\{1,\ldots,2^k-2\}$.
Then
\[\Bigl \lvert\Bigl\{(a,b)\in\bigl\{0,\ldots,2^k-2\bigr\}^2:a+b\equiv t\bmod 2^k-1, w(a)+w(b)<k\Bigr\}\Bigr \rvert\leq 2^{k-1}.\]

We prove that the Tu--Deng Conjecture holds almost surely in the following sense:
the proportion of $t\in\{1,\ldots,2^k-2\}$ 
such that the above inequality holds approaches $1$ as $k\rightarrow\infty$.

Moreover, we prove that the Tu--Deng Conjecture implies a conjecture due to T.~W.~Cusick concerning the sum of digits of $n$ and $n+t$.
\end{abstract}
\maketitle
\section{Introduction and results}
Z.~Tu and Y.~Deng's Conjecture~\cite{TD2011}
is concerned with the Hamming weight $w(n)$ of the binary expansion of a nonnegative integer $n$ (the sum of digits of $n$ in base two) and addition modulo $2^k-1$.
This conjecture is as follows.
\begin{conjecture}{\TuDengConj}\label{conj:TD}
Assume that $k$ is a positive integer and
$t\in\bigl\{1,\ldots,2^k-2\bigr\}$.
Define
\[S_{t,k}=\Bigl\{(a,b)\in\bigl\{0,\ldots,2^k-2\bigr\}^2:a+b\equiv t\bmod 2^k-1, w(a)+w(b)<k\Bigr\}.\]
Then $P_{t,k}\coloneqq \lvert S_{t,k}\rvert/2^k\leq 1/2$.
\end{conjecture}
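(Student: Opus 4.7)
The plan is to attack the conjectured bound $P_{t,k}\le 1/2$ for every~$t$ directly, without any averaging: since equality is attained for $t=2^j-1$ and other highly structured inputs, any proof must be combinatorially sharp rather than analytic. I would first reformulate $\lvert S_{t,k}\rvert$ in terms of ordinary integer addition. For $a,b\in\{0,\ldots,2^k-2\}$ the congruence $a+b\equiv t\pmod{2^k-1}$ forces $a+b\in\{t,\,t+2^k-1\}$. Combining $w(a)+w(b)=w(a+b)+2c(a,b)$, where $c(a,b)$ counts the carries produced in the binary addition of $a$ and $b$, with the elementary identity $w(t+2^k-1)=w(t)+\nu_2(t)$ (with $\nu_2$ the $2$-adic valuation), the Hamming constraint becomes $c(a,b)<\tfrac12(k-w(s))$ for $s\in\{t,\,t+2^k-1\}$. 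Writing $N_c(s)=\#\{(a,b)\in\dN_0^2:a+b=s,\,c(a,b)=c\}$, the conjecture becomes, up to a boundary correction from the constraint $a,b\le 2^k-2$, the inequality
\[\sum_{c<(k-w(t))/2}N_c(t)+\sum_{c<(k-w(t)-\nu_2(t))/2}N_c(t+2^k-1)\le 2^{k-1}.\]

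Second, I would exploit the block structure of~$s$ to compute $N_c(s)$ by a transfer matrix. With
\[T_0(y)=\begin{pmatrix}1&y\\0&2y\end{pmatrix},\qquad T_1(y)=\begin{pmatrix}2&0\\1&y\end{pmatrix},\]
which track carry-in/carry-out states and weight each outgoing carry by~$y$, the generating polynomial $\sum_c N_c(s)\,y^c$ equals an entry of $\prod_i T_{s_i}(y)$. Powers $T_0(y)^r$ and $T_1(y)^r$ admit clean closed forms, so the left-hand side above becomes an explicit polynomial in the block-lengths of $t$ (and of $t+2^k-1$), truncated at the thresholds $(k-w(t))/2$ and $(k-w(t)-\nu_2(t))/2$.

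The decisive step is to derive the sharp bound $2^{k-1}$ from this representation. I would attempt an induction on~$k$: conditioning on the top run of~$t$, the transfer-matrix product factorises, and one hopes to apply the inductive hypothesis to a truncation~$t'$ with smaller~$k'$. As a companion method I would try to construct an explicit injection $\phi\colon S_{t,k}\hookrightarrow\{0,\ldots,2^k-2\}^2\setminus S_{t,k}$ respecting the congruence, built from coordinated bit-flips inside blocks, and verify injectivity block-by-block. \emph{The main obstacle} --- and the reason Tu--Deng remains open --- is that the extremal inputs, notably $t=2^j-1$, saturate the bound with equality, so neither a recursion step nor a bit-flip scheme may lose any constant factor there; uniform block-wise estimates that suffice elsewhere are strictly too lossy. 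I expect the essential missing ingredient to be a classification of the near-extremal block patterns at each scale $k$, packaged as an inductive invariant that tracks the equality cases exactly --- precisely the structural insight that has so far eluded approaches to the full conjecture.
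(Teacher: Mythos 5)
You have set out to prove the full Tu--Deng Conjecture, but what you have written is a research plan, not a proof: the decisive step --- the induction on $k$ via the top run of $t$, or alternatively the injection $\phi\colon S_{t,k}\hookrightarrow\{0,\ldots,2^k-2\}^2\setminus S_{t,k}$ --- is never constructed, and you yourself concede that the inductive invariant controlling the equality cases is ``precisely the structural insight that has so far eluded approaches to the full conjecture.'' That concession is accurate. The statement you were given is an open conjecture; the paper does not prove it either. What the paper actually establishes is much weaker: an almost-all version (for all but $O(2^k/k)$ values of $t$ one has $1/2-\varepsilon<P_{t,k}<1/2$), obtained by computing the first and second moments of $t\mapsto|S_{t,k}|$ via trivariate rational generating functions and applying Chebyshev's inequality, together with the implication to Cusick's conjecture. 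Such an averaging argument cannot, by its nature, reach the extremal $t$ that saturate the bound (e.g.\ $|S_{1,2}|=2=2^{2-1}$), which is exactly the obstruction you identify for your own sharper approach. So there is a genuine and unfilled gap: no complete argument is given for any single nontrivial $t$ beyond the reformulation stage.

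Two concrete points on the parts you did write down. First, the carry identity is off by a factor of two: by Legendre/Kummer, the number $c(a,b)$ of carries in the binary addition of $a$ and $b$ satisfies $w(a)+w(b)=w(a+b)+c(a,b)$, not $w(a+b)+2c(a,b)$ (check $a=b=1$: two ones in, one out, one carry). Consequently your thresholds should read $c(a,b)<k-w(s)$ rather than $c(a,b)<\tfrac12(k-w(s))$, and the truncation points of your transfer-matrix polynomial change accordingly. Second, the splitting of $S_{t,k}$ according to $a+b=t$ versus $a+b=t+2^k-1$, and the identity $w(t+2^k-1)=w(t)+\nu_2(t)$, are correct and essentially coincide with the paper's decomposition into $S^{(1)}_{t,k}$ and $S^{(2)}_{t,k}$ (there phrased via $w(2^k-1-t)=k-w(t)$ and the quantities $\beta_{t,k,j}$, which are exactly the carry-counting statistics $\vartheta(j,t)$ from Pascal's triangle). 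So your reformulation is sound once the factor of two is repaired, but it only brings you to the starting line; the conjecture itself remains unproved.
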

The conjecture arose in the construction of Boolean functions with optimal algebraic immunity (see Tu and Deng~\cite{TD2011,TD2012}).
Indeed, if the conjecture is true, the functions defined by Tu and Deng have this property.

Such functions are used in the construction of stream ciphers, which are widely used encryption methods due to their high speed and low hardware requirements~\cite{C2010}. 
However, they are prone to serious attacks~\cite{C2003,CM2003,A2004}. 
In order to prevent them from these known attacks algebraic immunity was introduced~\cite{MPC2004}. 
We refer the reader to the above-cited papers by Tu and Deng for a more extensive discussion of the r\^ole of their conjecture within the cryptographic context.

So far the conjecture could only be solved for some special cases~\cite{CLS2011,DY2012,FRCM10,QSF2016}.
Moreover, it was checked for all $k\leq 29$ by Tu and Deng~\cite{TD2011}
and for $k\in\{39,40\}$ by Flori~\cite{F2012}.

Let us give a probabilistic (and combinatorial) interpretation of the conjecture. 
Let $S_k := \bigcup_{t=1}^{2^k-2} S_{t,k}$. 
Let us consider an arbitrary pair $(a,b)$ of $S_k$.
On the one hand, the number of $1$s in the binary expansion of $a$ (and $b$) is at most $k-1$  . 
On the other hand, the constraint on the Hamming weights implies that the total number of $1$s in both integers is less than $k$. 
Finally, note that all such pairs except $(0,0)$ are part of $S_k$.
Therefore, considering how we may (or actually may not) distribute $1$s on the $2k$ digits in base $2$ of $a$ and $b$ together we get
\begin{align*}
	|S_k| =  2^{2k} - \sum_{i=k}^{2k} {2k \choose i} - 1 
        = \frac{1}{2} \left(2^{2k} - {2k \choose k}\right) - 1.
\end{align*}
The sequence including $(0,0)$, i.e., the sequence for $|S_k|+1$ is \texttt{A000346} in Sloane's OEIS\footnote{\url{http://oeis.org}}. 

It is then easy to compute the asymptotic expansion of this sequence as
\[ |S_k|  = \frac{2^{2k}}{2} \left(1 - \frac{1}{\sqrt{\pi k}} + O\left(\frac{1}{k^{3/2}}\right)\right). \]
As there are $2^k-2$ possible choices for $t$ we see by the pigeonhole principle that at least one of the sets $S_{t,k}$ has to be asymptotically of size $2^k/2$.
Therefore, the Tu--Deng Conjecture describes a uniform distribution among the possible sets $S_{t,k}$.

While working on the Tu--Deng Conjecture,
T.~W.~Cusick (private communication, 2011, 2015) formulated a related conjecture on the Hamming weight:
\begin{conjecture}{\CusickConj}\label{conj:C}
Assume that $t$ is a nonnegative integer.
Then
\[c_t\coloneqq\dens\bigl\{n\in\dN:w(n+t)\geq w(n)\bigr\}>\frac 12,\]
where $\dens A$ denotes the asymptotic density of a set $A\subseteq \dN$
(which exists in this case).
\end{conjecture}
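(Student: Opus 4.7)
The plan is to deduce Cusick's conjecture from the Tu--Deng conjecture by establishing a clean correspondence between $S_{t,k}$ and the set
\[
  B_{t,k} \coloneqq \bigl\{m \in \{0,\ldots,2^k-1\} : w(m+t) < w(m)\bigr\}
\]
of ``Cusick-bad'' integers below $2^k$, and then exploiting a doubling recursion for $\lvert B_{t,k}\rvert$ that upgrades the weak Tu--Deng consequence $c_t^- \le 1/2$ to the strict inequality Cusick requires. The case $t=0$ is trivial, so fix $t \ge 1$ and take $k$ with $2^k \gg t$ throughout.

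First, I would decompose $S_{t,k}$ according to whether $a+b = t$ (Case~I) or $a+b = 2^k - 1 + t$ (Case~II). Case~I yields exactly $t+1$ pairs, each with $w(a)+w(b) \le w(t) + 2\lfloor\log_2(t+1)\rfloor < k$ for $k$ large. In Case~II, the substitution $a = m+t$, $b = 2^k - 1 - m$ with $m \in \{1,\ldots,2^k - 2 - t\}$, combined with $w(b) = k - w(m)$, turns $w(a)+w(b) < k$ into $w(m+t) < w(m)$, so these pairs correspond bijectively to the bad $m$ in that range. A brief inspection of the boundary values $m \in \{0, 2^k-1-t\} \cup \{2^k - t, \ldots, 2^k - 1\}$ (the first two are never bad, and each of the last $t$ is always bad for $k$ large) then yields the clean identity
\[
  \lvert S_{t,k}\rvert = \lvert B_{t,k}\rvert + 1.
\]

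The second and key step is the doubling recursion $\lvert B_{t,k+1}\rvert = 2 \lvert B_{t,k}\rvert$ for $k$ large. Splitting $\{0,\ldots,2^{k+1}-1\}$ into its dyadic halves and writing elements of the upper half as $m = 2^k + m'$, the condition $w(m+t) < w(m)$ either reduces to the lower-half condition $w(m'+t) < w(m')$ (when $m' + t < 2^k$, contributing $\lvert B_{t,k}\rvert - t$ bad values) or, in the narrow band $m' \in \{2^k - t, \ldots, 2^k - 1\}$, to an inequality of the form $w(s) + w(t-1-s) < k$ that holds automatically for $k$ large and contributes $t$ more. These two contributions add to exactly $\lvert B_{t,k}\rvert$, which combined with the lower-half count gives the doubling. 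Consequently $\lvert B_{t,k}\rvert / 2^k$ is eventually constant, so the density $c_t^- \coloneqq \dens\{m \in \dN : w(m+t) < w(m)\}$ exists and equals this rational number.

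Combining everything, Tu--Deng gives $\lvert B_{t,k}\rvert = \lvert S_{t,k}\rvert - 1 \le 2^{k-1} - 1$, and hence $c_t^- \le (2^{k-1} - 1)/2^k = 1/2 - 1/2^k < 1/2$; since the left-hand side does not depend on $k$, this strict inequality is preserved, and $c_t = 1 - c_t^- > 1/2$ follows. The main obstacle is the doubling recursion: one must track the Hamming weights carefully as $m$ crosses the $2^k$ boundary and verify that the upper band $\{2^k-t,\ldots,2^k-1\}$ contributes exactly the $t$ extra values needed to offset the deficit appearing in the restricted range of the first case above. Once this is in place, the single ``$+1$'' in $\lvert S_{t,k}\rvert = \lvert B_{t,k}\rvert + 1$ provides precisely the additive slack that turns Tu--Deng's non-strict bound into Cusick's strict one.
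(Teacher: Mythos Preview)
Your proof is correct and follows essentially the same route as the paper (Section~2): split $S_{t,k}$ according to whether $a+b=t$ or $a+b=2^k-1+t$, identify the second part with the Cusick-bad integers, and extract the strict inequality from the extra ``$+1$'' that Case~I contributes. The paper obtains the stabilisation of $\lvert B_{t,k}\rvert/2^k$ by quoting equation~\eqref{eqn:ct_finite} from~\cite{DKS2016}, whereas you prove the doubling recursion directly; your clean identity $\lvert S_{t,k}\rvert=\lvert B_{t,k}\rvert+1$ is implicit in the paper's inequality chain $2^{k-1}\ge (t+1)+\lvert\{a\le 2^k-1-t:\ldots\}\rvert>\lvert\{a\le 2^k-1:\ldots\}\rvert$.
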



Also, note that the density in Conjecture~\ref{conj:C} exists, which follows, for example,
from the ``Lemma of B\'esineau''~\cite[Lemme~1]{B1972},
see also~\cite[Lemma~2.1]{DKS2016}.
In fact, we have
\begin{align}
c_t&=\frac 1{2^k}\bigl \lvert \{n<2^k:w(n+t)\geq w(n)\}\bigr \rvert
\label{eqn:ct_finite}
\end{align}
for $k\geq \alpha+\mu$, where $\alpha=w(t)+1$ and $2^\mu\leq t<2^{\mu+1}$~\cite[equation~(10) and Section~3.3]{DKS2016}.
We also studied~\cite{DKS2016} a statement complementary to Cusick's Conjecture:
\begin{conjecture}{\CusickConjComp}\label{conj:C2}
Assume that $t$ is a nonnegative integer. Then 
\[\tilde c_t\coloneqq\dens\bigl\{n\in\dN:w(n+t)> w(n)\bigr\}\leq \frac 12.\]
\end{conjecture}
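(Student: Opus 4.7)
The plan is to deduce Conjecture~\ref{conj:C2} from the Tu--Deng Conjecture via a weight-complement injection, dual to the derivation of Cusick's Conjecture~\ref{conj:C} from TD announced in the abstract.

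First I would construct an injection. Fix $t\in\{1,\ldots,2^k-2\}$ and set $t^{\ast}\coloneqq 2^k-1-t$, which also lies in $\{1,\ldots,2^k-2\}$. Define
\[
\phi(n)=\bigl(n,\; 2^k-1-n-t\bigr),\qquad 0\le n\le 2^k-1-t.
\]
Both coordinates lie in $\{0,\ldots,2^k-2\}$ and sum to $t^{\ast}$, so $a+b\equiv t^{\ast}\pmod{2^k-1}$. Since $n+t<2^k$, the second coordinate is the bitwise complement of $n+t$ in $k$ bits and hence has Hamming weight $k-w(n+t)$. Therefore
\[
w(n)+w\bigl(2^k-1-n-t\bigr)=w(n)+k-w(n+t),
\]
which is strictly less than $k$ precisely when $w(n+t)>w(n)$. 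Hence $\phi$ restricts to an injection $\bigl\{n\le 2^k-1-t:\,w(n+t)>w(n)\bigr\}\hookrightarrow S_{t^{\ast},k}$.

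Next I would apply \TuDengConj\ to $t^{\ast}$. Conjecture~\ref{conj:TD} gives $\lvert S_{t^{\ast},k}\rvert\le 2^{k-1}$, and the remaining range contributes at most $t$ values of $n<2^k$, so $\bigl\lvert\{n<2^k:w(n+t)>w(n)\}\bigr\rvert\le 2^{k-1}+t$. An analogue of~\eqref{eqn:ct_finite} for $\tilde c_t$, obtained from the same Lemma of B\'esineau, shows that this count divided by $2^k$ equals $\tilde c_t$ for all sufficiently large $k$; letting $k\to\infty$ then yields $\tilde c_t\le 1/2$.

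The main obstacle is that the argument is conditional on TD, which is itself open. To make CC unconditional one would need either a full proof of TD, or a direct argument. A natural direct route uses the identity $w(n+t)-w(n)=w(t)-c(n,t)$, where $c(n,t)$ counts the binary carries in $n+t$: Conjecture~\ref{conj:C2} is equivalent to saying that the median of $c(n,t)$ is at least $w(t)$. A short computation shows that the mean of $c(n,t)$ also equals $w(t)$ (asymptotically in $k$), so the required bound is a median--mean comparison. Establishing it directly appears difficult because the carry distribution is typically skew and depends sensitively on the binary pattern of $t$.
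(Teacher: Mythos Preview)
Your proposal is correct and follows essentially the same route as the paper: apply Conjecture~\ref{conj:TD} to the complement $t^{\ast}=2^k-1-t$, use the identity $w(2^k-1-m)=k-w(m)$ to convert the weight constraint in $S_{t^{\ast},k}$ into the condition $w(n+t)>w(n)$, absorb the remaining $O(t)$ terms, and let $k\to\infty$ via~\eqref{eqn:tildect_finite}. The paper phrases this through the auxiliary quantities $\beta_{t,k,j}$ and equation~\eqref{eqn:two_parts}, whereas your explicit injection $\phi$ is a slightly more direct packaging of the same argument.
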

Analogously to the case $c_t$, we have
\begin{align}
\tilde c_t&=\frac 1{2^{k-1}}\bigl \lvert \{n<2^{k-1}:w(n+t)>w(n)\}\bigr \rvert.
\label{eqn:tildect_finite}
\end{align}
for $k$ large enough.
Taken together, Conjectures~\ref{conj:C} and~\ref{conj:C2}
locate quite precisely the median of the random variable $X_t$ on $\dZ$ defined by
\[j\mapsto \dens\bigl\{n:w(n+t)-w(n)=j\bigr\}.\]
Numerical experiments reveal that $\tilde c_t\leq 1/2<c_t$ for all $t<2^{30}$.
In fact, Drmota, Kauers, and the first author~\cite{DKS2016} proved that Conjectures~\ref{conj:C} and~\ref{conj:C2} are satisfied for almost all $t$ in the sense of asymptotic density.
In the present paper, we want to show that an analogous result holds for Conjecture~\ref{conj:TD}.


\begin{theorem}\label{thm:main}
Define $P_{t,k}$ as before,
\[P_{t,k}=\frac{1}{2^k}\left\lvert\Bigl\{(a,b)\in\bigl\{0,\ldots,2^k-2\bigr\}^2:a+b\equiv t\bmod 2^k-1, w(a)+w(b)<k\Bigr\}\right\rvert.\]
For each $\varepsilon>0$, we have for $k\rightarrow\infty$
\[\bigl \lvert\bigl\{t\in\{1,\ldots,2^k-2\}:P_{t,k}\not\in(1/2-\varepsilon, 1/2)\bigr\}\bigr \rvert =O\left(\frac{2^k}k\right). \]
In particular,
\[
\lim_{k\rightarrow\infty}\frac 1{2^k}\bigl \lvert\bigl\{t\in\{1,\ldots,2^k-2\}:
1/2-\varepsilon<P_{t,k}< 1/2\bigr\}\bigr \rvert =1.
\]
\end{theorem}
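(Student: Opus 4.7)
The plan is a second-moment argument for the random variable $t\mapsto P_{t,k}$ under uniform $t\in\{1,\ldots,2^k-2\}$. The first moment is already implicit in the combinatorial discussion preceding the theorem: summing $|S_{t,k}|$ over $t$ recovers $|S_k|$, so
\[
\bar P_k := \frac{1}{2^k-2}\sum_{t=1}^{2^k-2} P_{t,k} = \frac{|S_k|}{(2^k-2)\,2^k} = \frac{1}{2} - \frac{1}{2\sqrt{\pi k}} + O\bigl(k^{-3/2}\bigr).
\]
Thus the mean sits just below $1/2$, at distance of order $1/\sqrt{k}$. We then plan to control $\operatorname{Var}_t(P_{t,k})$ and apply Chebyshev's inequality. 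For the lower tail $\{t:P_{t,k}\leq 1/2-\varepsilon\}$ the deviation from $\bar P_k$ is $\Theta(1)$ (with $\varepsilon$ fixed and $k$ large), so a variance bound of size $o(1)$ already suffices; but for the upper tail $\{t:P_{t,k}\geq 1/2\}$, whose deviation from $\bar P_k$ is only $\Theta(1/\sqrt{k})$, Chebyshev demands $\operatorname{Var}_t(P_{t,k}) = O(1/k^2)$ in order to secure an exceptional set of size $O(2^k/k)$.

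To estimate the variance we interpret
\[
\sum_{t}|S_{t,k}|^2 = \Bigl\lvert\bigl\{((a_1,b_1),(a_2,b_2))\in A\times A : a_1+b_1\equiv a_2+b_2 \bmod 2^k-1\bigr\}\Bigr\rvert,
\]
where $A=\{(a,b)\in\{0,\ldots,2^k-2\}^2 : w(a)+w(b)<k\}$ is the set of admissible pairs. A Fourier expansion on $\dZ/(2^k-1)\dZ$ combined with Plancherel's identity gives
\[
\sum_{t} |S_{t,k}|^2 = \frac{1}{2^k-1}\sum_{j=0}^{2^k-2}\bigl\lvert g(j)\bigr\rvert^2,\qquad g(j)=\sum_{(a,b)\in A}\omega^{j(a+b)},\quad \omega=e^{2\pi i/(2^k-1)}.
\]
The term $j=0$ contributes $|A|^2/(2^k-1)$, reproducing the square of the mean, so the variance is governed by $\sum_{j\neq 0}|g(j)|^2$. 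Factoring $\sum_{a\in[0,2^k-1]}x^{w(a)}\omega^{ja}=\prod_{i=0}^{k-1}(1+x\omega^{j\cdot 2^i})$ and splitting the sum defining $g(j)$ by the weight condition $w(a)+w(b)<k$ yields the workable formula
\[
g(j) = \sum_{m=0}^{k-1}[x^m]\Bigl(\prod_{i=0}^{k-1}(1+x\omega^{j\cdot 2^i})\Bigr)^{2},
\]
identifying each nontrivial Fourier coefficient as a partial sum of coefficients of a product determined entirely by the multiplicative orbit $\{j\cdot 2^i\bmod 2^k-1\}$.

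The hard part will be the sharp estimate $\sum_{j\neq 0}|g(j)|^2 = O(|A|^2/k^2)$. For a typical $j$ the orbit above has maximal size $k$ and is well distributed on $\dZ/(2^k-1)\dZ$; the product $\prod_i(1+x\omega^{j\cdot 2^i})$ should then exhibit random-walk type cancellation, so that $|g(j)|$ is polynomially smaller than $|A|=g(0)$. The exceptional $j$, whose orbits have length $d$ properly dividing $k$, are parametrised by divisors $d<k$ of $k$ and number at most $\sum_{d\mid k,\,d<k}(2^d-1)=O(2^{k/2})$; these can be absorbed via the trivial bound $|g(j)|\leq |A|$ at negligible cost. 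Once the variance estimate is secured, two applications of Chebyshev's inequality---at scales $\varepsilon$ and $1/\sqrt k$ respectively---finish the proof.
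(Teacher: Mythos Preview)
Your overall framework---Chebyshev from the first two moments, with the mean at $\tfrac12-\Theta(k^{-1/2})$ and a target variance of $O(k^{-2})$---is exactly the paper's, and the first moment is handled the same way. The divergence is in the second moment: you reduce it via Fourier on $\dZ/(2^k-1)\dZ$ to the estimate $\sum_{j\ne 0}|g(j)|^2=O(|A|^2/k^2)$, whereas the paper never touches characters. Instead it writes $|S_{t,k}|$ in terms of the quantities $\beta_{t,k,j}=\lvert\{a\le t:w(a+2^k-1-t)-w(a)=j\}\rvert$, derives two-term recurrences for $\beta$ (Proposition~\ref{prp:beta_recurrence}), packages the products $\sum_t\beta_{t,k,\cdot}\beta_{t,k,\cdot}$ and three companion sums into a system of functional equations for trivariate generating functions, solves the system to an explicit rational $F(x,y,z)$ (Proposition~\ref{prp:second_moment}), and extracts the diagonal $[x^ky^{k-1}z^{k-1}]F$ by a saddle-point contour argument (Proposition~\ref{prp_diagonal}). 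This yields an expansion of $8^{-k}\sum_t|S_{t,k}|^2$ to order $k^{-2}$; the terms through $k^{-3/2}$ coincide with the square of the first moment, and only the $k^{-2}$ term survives in the variance.

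Your route has a genuine gap, and it is quantitative rather than merely heuristic. You argue that for typical $j$ ``random-walk type cancellation'' makes $|g(j)|$ \emph{polynomially} smaller than $|A|=g(0)$. But there are $2^k-2$ nonzero frequencies, so a polynomial savings per frequency gives $\sum_{j\ne 0}|g(j)|^2$ of order $2^k|A|^2/\mathrm{poly}(k)$, overshooting your own target $|A|^2/k^2$ by a factor of $2^k$. What is actually required is an \emph{exponential} savings: on average $|g(j)|/|A|$ must be of order $2^{-k/2}k^{-1}$, i.e.\ $|g(j)|$ of order $2^{3k/2}/k$. Nothing in the proposed mechanism---equidistribution of the orbit $\{j\cdot 2^i\}$ or coefficient-level random walks in $\prod_i(1+x\omega^{j2^i})$---delivers savings of that strength, and no substitute is offered. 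Even granting that the target inequality is true (it is, a posteriori, from the paper's generating-function computation), the Fourier reformulation by itself does not bring it closer: there is no standard character-sum technology that controls partial coefficient sums of such products to the required exponential precision.
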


Moreover, we will prove that Conjectures~\ref{conj:C} and~\ref{conj:C2} are in fact implied by Conjecture~\ref{conj:TD}.
\begin{proposition}\label{prp:implication}
Conjecture~\ref{conj:TD} implies Conjectures~\ref{conj:C} and~\ref{conj:C2}.
\end{proposition}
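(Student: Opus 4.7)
The plan is to parametrize the pairs in $S_{t,k}$ by a single integer $n$, so that the weight condition becomes a Cusick-type comparison between $w(n)$ and $w(n+t)$, and then to apply Conjecture~\ref{conj:TD} both to $t$ and to its $k$-bit complement $s:=2^k-1-t$.

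Fix $t\geq 1$, let $2^\mu\leq t<2^{\mu+1}$, and take $k$ so large that all carry counts appearing below stay strictly smaller than $k-1$ and that~\eqref{eqn:ct_finite},~\eqref{eqn:tildect_finite} apply. Every pair $(a,b)\in\{0,\ldots,2^k-2\}^2$ with $a+b\equiv t\pmod{2^k-1}$ satisfies $a+b=t$ or $a+b=t+2^k-1$. In the first case, the Legendre--Kummer identity gives $w(a)+w(b)=w(t)+c$ with carry count $c\leq\mu$, so $w(a)+w(b)<k$ holds automatically by our bound on $k$, and all $t+1$ such pairs lie in $S_{t,k}$. In the second case, I parametrize $(a,b)=(n+t,\,2^k-1-n)$ with $n\in\{1,\ldots,2^k-2-t\}$; then $w(a)+w(b)=w(n+t)+k-w(n)$, so $w(a)+w(b)<k$ is equivalent to $w(n+t)<w(n)$. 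Combining,
\[|S_{t,k}|=(t+1)+\bigl\lvert\{n\in\{1,\ldots,2^k-2-t\}:w(n+t)<w(n)\}\bigr\rvert.\]
The remaining $n\in\{0,\,2^k-1-t\}\cup\{2^k-t,\ldots,2^k-1\}$ are handled individually: $n=0$ and $n=2^k-1-t$ both fail $w(n+t)<w(n)$, while for $n\in\{2^k-t,\ldots,2^k-1\}$, setting $m:=n+t-2^k\in\{0,\ldots,t-1\}$ reduces the inequality to $w(m)+w(t-1-m)<k-1$, which holds for every such $m$ by our choice of $k$. These $t$ extra contributions together with the identity yield the clean formula
\[\bigl\lvert\{n<2^k:w(n+t)<w(n)\}\bigr\rvert=|S_{t,k}|-1\leq 2^{k-1}-1\]
under Conjecture~\ref{conj:TD}, giving $c_t\geq\tfrac12+1/2^k>\tfrac12$ by~\eqref{eqn:ct_finite}, i.e.\ Conjecture~\ref{conj:C}.

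For Conjecture~\ref{conj:C2}, I repeat the analysis on the dual set $T_{t,k}$ of pairs $(a,b)\in\{0,\ldots,2^k-2\}^2$ with $a+b\equiv t\pmod{2^k-1}$ and $w(a)+w(b)>k$. The bitwise-complement involution $(a,b)\mapsto(2^k-1-a,\,2^k-1-b)$ on $\{1,\ldots,2^k-2\}^2$ preserves the congruence up to sign and flips the weight inequality, yielding $|T_{t,k}|=|S_{s,k}|-2$ (the $-2$ absorbs the edge pairs $(0,s),(s,0)$ of $S_{s,k}$, which have no counterpart in $T_{t,k}$). Decomposing $T_{t,k}$ as above, Case~1 is now empty (since $w(a)+w(b)\leq w(t)+\mu<k$) and Case~2 gives $|T_{t,k}|=|\{n\in\{1,\ldots,2^k-2-t\}:w(n+t)>w(n)\}|$. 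Extending to $\{0,\ldots,2^k-1\}$, this time $n=0$ and $n=2^k-1-t$ each contribute $1$, while the top block contributes $0$ by the same carry analysis with the opposite inequality sign. Hence
\[\bigl\lvert\{n<2^k:w(n+t)>w(n)\}\bigr\rvert=|T_{t,k}|+2=|S_{s,k}|\leq 2^{k-1},\]
so $\tilde c_t\leq\tfrac12$ by~\eqref{eqn:tildect_finite}.

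The main obstacle is not technical depth but careful bookkeeping of the three groups of boundary values: one must verify separately for the ``$<$'' and ``$>$'' counts that they contribute exactly the amounts needed to cancel against the Case~1 shift $+(t+1)$ and the edge-pair shift $-2$, producing the identities $|\{n<2^k:w(n+t)<w(n)\}|=|S_{t,k}|-1$ and $|\{n<2^k:w(n+t)>w(n)\}|=|S_{s,k}|$. Once these are in place, Conjecture~\ref{conj:TD} translates directly into Conjectures~\ref{conj:C} and~\ref{conj:C2}, with no further ingredient beyond the Legendre--Kummer carry formula.
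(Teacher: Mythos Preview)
Your argument is correct and follows the same overall strategy as the paper: split $S_{t,k}$ into the cases $a+b=t$ and $a+b=t+2^k-1$, observe that for large $k$ the first case contributes all $t+1$ pairs, rewrite the second case as a Cusick-type count via $n\mapsto w(n+t)-w(n)$, and then apply Conjecture~\ref{conj:TD} once to $t$ (for~\ref{conj:C}) and once to $s=2^k-1-t$ (for~\ref{conj:C2}).

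The difference is in the bookkeeping. The paper is content with crude bounds (for~\ref{conj:C} it uses $t+1>t$ to get strict inequality without analyzing the top block $\{2^k-t,\ldots,2^k-1\}$; for~\ref{conj:C2} it works through the $\beta_{t,k,j}$ notation and carries an $O(t)$ error, then lets $k\to\infty$). You instead track the boundary values $n\in\{0,\,2^k-1-t\}\cup\{2^k-t,\ldots,2^k-1\}$ exactly and, for~\ref{conj:C2}, replace the $\beta$-calculus by the bitwise-complement bijection $T_{t,k}\leftrightarrow S_{s,k}\setminus\{(0,s),(s,0)\}$. This yields the sharp identities $\lvert\{n<2^k:w(n+t)<w(n)\}\rvert=|S_{t,k}|-1$ and $\lvert\{n<2^k:w(n+t)>w(n)\}\rvert=|S_{s,k}|$, which is neater than the paper's asymptotic argument but not a different method.
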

In fact, we will see that Conjectures~\ref{conj:C} and~\ref{conj:C2} are contained as ``extremal cases'' in Conjecture~\ref{conj:TD}, choosing $t$ and letting $k\rightarrow \infty$.

However, so far we did not succeed in proving the opposite implication.
Meanwhile, due to the similarity of the conjectures,
it is reasonable to expect that a proof of Conjecture~\ref{conj:C},
when one is found (and if it is found first),
 will lead to a proof of Conjecture~\ref{conj:TD}.
We wish to highlight this similarity between the conjectures.

\begin{proposition}\label{prp_similar}
For integers $k\geq 1$ and $a,b$ we define
\[a\oplus_k b=(a+b)\bmod (2^k-1).\]
Conjecture~\ref{conj:TD} is equivalent to the statement that
\begin{equation}\label{eqn_TD_equiv}
\lvert\{n\in\{0,\ldots,2^k-1\}:w(n\oplus_k t)\geq w(n)\}\rvert\geq 2^{k-1}
\end{equation}
for all $k\geq 1$ and $t\in\{1,\ldots,2^k-2\}$.
Conjecture~\ref{conj:C} is equivalent to the statement that
\begin{equation}\label{eqn_C_equiv}
\lvert\{n\in\{0,\ldots,2^k-1\}:w(n+t)\geq w(n)\}\rvert>2^{k-1}
\end{equation}
for all $k,t\geq 1$.
\end{proposition}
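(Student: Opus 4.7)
The two equivalences require rather different arguments, and I will treat them in turn.

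\emph{First equivalence (Tu--Deng).} My plan is to establish the identity
\[\lvert S_{t,k}\rvert+\bigl\lvert\{n\in\{0,\ldots,2^k-1\}:w(n\oplus_k t)\geq w(n)\}\bigr\rvert=2^k\]
by a direct bijection, which immediately yields the equivalence by complementation. A pair $(a,b)\in S_{t,k}$ is determined by its first coordinate, since $b\equiv t-a\pmod{2^k-1}$. Applying the substitution $a=2^k-1-n$, which bijects $\{0,\ldots,2^k-2\}$ onto $\{1,\ldots,2^k-1\}$, I expect to verify that $w(a)=k-w(n)$ and $(t-a)\bmod(2^k-1)=n\oplus_k t$, so that the condition $w(a)+w(b)<k$ rewrites precisely as $w(n\oplus_k t)<w(n)$. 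Including $n=0$ (which trivially fails this strict inequality) extends the indexing to $\{0,\ldots,2^k-1\}$ without changing the count, and the identity above follows.

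\emph{Second equivalence (Cusick).} Set $N(k,t):=\bigl\lvert\{n<2^k:w(n+t)\geq w(n)\}\bigr\rvert$, so that~\eqref{eqn:ct_finite} reads $c_t=N(k,t)/2^k$ for $k\geq\alpha+\mu$. My plan is to deduce the equivalence from the monotonicity
\[\frac{N(k+1,t)}{2^{k+1}}\leq\frac{N(k,t)}{2^k}\qquad(k\geq 1),\]
which, combined with the eventual stability at $c_t$, forces $N(k,t)/2^k\geq c_t$ for every $k$. Both directions then drop out: $c_t>1/2$ yields $N(k,t)>2^{k-1}$ for all $k\geq 1$, and conversely if~\eqref{eqn_C_equiv} holds universally then choosing any $k\geq\alpha+\mu$ in~\eqref{eqn:ct_finite} gives $c_t>1/2$.

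The crux is thus the monotonicity $N(k+1,t)\leq 2N(k,t)$. I intend to exploit the classical carry identity $w(n+t)=w(n)+w(t)-c(n,t)$, where $c(n,t)$ counts the carries in the binary addition $n+t$, so that $n$ contributes to $N(k,t)$ iff $c(n,t)\leq w(t)$. For each new element $n=2^k+m\in\{2^k,\ldots,2^{k+1}-1\}$ I will show $c(n,t)\geq c(m,t)$: bit positions below $k$ produce identical carries in $n+t$ and $m+t$, while at position $k$ (where $n$ has bit $1$ and $m$ has bit $0$) and above, an inductive comparison of carry-ins shows the carry-out in $n+t$ is never smaller than that in $m+t$. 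Hence the map $n\mapsto m$ injects the new elements into the set counted by $N(k,t)$, giving $N(k+1,t)-N(k,t)\leq N(k,t)$. The step I expect to be most delicate is this inductive carry-propagation past position $k$, which should be handled by a short case analysis on the bit $t_i$ and the (comparable) carry-ins for $n+t$ and $m+t$.
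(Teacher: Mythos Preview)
Your proposal is correct on both parts.

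\textbf{First equivalence.} Your substitution $a=2^k-1-n$ reaches the same identity
$\lvert S_{t,k}\rvert=\lvert\{n\in\{0,\ldots,2^k-1\}:w(n\oplus_k t)<w(n)\}\rvert$
as the paper, and in fact by a slightly more direct route. The paper first decomposes $S_{t,k}$ into the two ranges $a\leq t$ and $a>t$ (as in the proof of Proposition~\ref{prp:implication}), rewrites in terms of $a\oplus_k t^{\comp}_k$, and then substitutes $a\mapsto a\oplus_k t$ using $(a\oplus_k t)\oplus_k t^{\comp}_k=a$. Your single change of variable bypasses the intermediate appearance of $t^{\comp}_k$; the content is the same.

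\textbf{Second equivalence.} Here your approach is genuinely different from the paper's. The paper proves the stronger statement that $v_{t,k,j}=2^{-k}\lvert\{n<2^k:w(n+t)-w(n)\geq j\}\rvert$ is nonincreasing in $k$ for every $j$, by induction on $t$: the base case $t=1$ uses $w(n+1)-w(n)=1-\nu_2(n+1)$, and the inductive step passes from $t$ (and $t+1$) to $2t$ and $2t+1$ via the identities $d(2n,2t)=d(2n+1,2t)=d(n,t)$, $d(2n,2t+1)=d(n,t)+1$, $d(2n+1,2t+1)=d(n,t+1)-1$. Your argument instead fixes $t$ and compares $n=2^k+m$ with $m$ directly via the carry count, using that the carry sequences agree below position $k$ and that adding the bit $1$ at position $k$ can only increase subsequent carries; from $c(2^k+m,t)\geq c(m,t)$ and $w(n+t)-w(n)=w(t)-c(n,t)$ the monotonicity follows. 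This is shorter and in fact yields the paper's ``for all $j$'' statement as well, since $c(n,t)\geq c(m,t)$ gives $w(n+t)-w(n)\leq w(m+t)-w(m)$. The paper's recursive decomposition, on the other hand, is closer in spirit to the generating-function machinery used later for the main theorem.
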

The binary operation $\oplus_k$
can also be seen as ``circular addition'' in base $2$:
if a carry occurs at the index $k-1$ in the addition $a+b$, this carry does not propagate into position $k$, but into the lowest bit instead.
Moreover, if $a+b=2^k-1$, the result is set to zero.

By Proposition~\ref{prp_similar}, we may summarize the content of Conjectures~\ref{conj:TD} and~\ref{conj:C} by the following elementary question: how does the sum of digits change under (modular) addition of a constant?
It is this formulation in particular that makes the Tu--Deng Conjecture a mathematically interesting problem.

The idea of the proof of Theorem~\ref{thm:main} is to show a concentration result using Chebyshev's inequality.
More precisely, we consider the moments
\[\frac 1{2^k}\sum_{0\leq t<2^k}\lvert S_{t,k}\rvert\quad\textrm{and}\quad \frac 1{2^k}\sum_{0\leq t<2^k}\lvert S_{t,k}\rvert^2\]
and derive asymptotic expansions for them.
(Note that $|S_{0,k}|=|S_{2^k-1,k}|=1$,
so that the cases $t\in\{0,2^k-1\}$ will not matter asymptotically.)
These expansions are then used to prove that the values $P_{t,k}$ concentrate well below $1/2$, as $k\rightarrow\infty$.
This idea of proof is analogous to the method used by Drmota, Kauers, and the first author~\cite{DKS2016}.
In fact, the trivariate rational generating function we are going to encounter is very similar to the one in that paper.

The remaining part of this paper is dedicated to the proofs of Theorem~\ref{thm:main} and Propositions~\ref{prp:implication} and~\ref{prp_similar}.
Throughout the proofs, we will use the notation $t^\comp_k=2^k-1-t$.
We will assume that $0\leq t<2^k$; then the binary expansion of $t^\comp_k$ is the Boolean complement of the binary expansion of $t$, padded with $1$s up to the index $k-1$.

\section{Proof of Proposition~\ref{prp:implication}}
We first rewrite the Tu--Deng Conjecture.
Let us split the set $S_{t,k}$ according to whether $a+b<2^k-1$: set
\begin{align*}
S^{(1)}_{t,k}&=\bigl\{a\in\{0,\ldots,t\}:w(a)+w(t-a)< k\bigr\},\\
S^{(2)}_{t,k}&=\bigl\{a\in\bigl\{t+1,\ldots,2^k-2\bigr\}:
                      w(a)+w\bigl(2^k-1+t-a\bigr)< k\bigr\}.
\end{align*}
Note that the sets 
$M^{(1)}_{t,k} = \{ (a,t-a) : a \in S^{(1)}_{t,k} \}$ and 
$M^{(2)}_{t,k} = \{ (a,2^t-1+t-a) : a \in S^{(2)}_{t,k} \}$ form a partition of $S_{t,k}$.
We define the quantity
\[\beta_{t,k,j}=\bigl \lvert\bigl\{a\in\bigl\{0,\ldots,t\bigr\}: w\bigl(a+2^k-1-t\bigr)-w(a)=j\bigr \rvert,\]
where $k\geq 1$, $0\leq t<2^k$ and $j$ are integers.
By the identity $w\bigl(2^k-1-t\bigr)=k-w(t)$
we have
\[    S^{(1)}_{t,k}=\bigl\{a\in\{0,\ldots,t\}:
                           w(a)<w\bigl(a+2^k-1-t\bigr)\bigr\}    \]
and
\begin{align*}
S^{(2)}_{t,k}&=\bigl\{a\in\bigl\{t+1,\ldots,2^k-2\bigr\}:w(a)<w(a-t)\bigr\}\\
&=\bigl\{a\in\bigl\{0,\ldots,2^k-2-(t+1)\}:
         w\bigl(2^k-1-(a+1)\bigr)< w\bigl(2^k-1-(a+t+1)\bigr)\bigr\}\\
&=\bigl\{a\in\bigl\{1,\ldots,2^k-2-t\bigr\}:w(a)>w(a+t)\bigr\}.
\end{align*}

Since $w(0)\not >w(0+t)$ and $w\bigl(2^k-1-t\bigr)\not >w\bigl(2^k-1\bigr)$, we obtain
\begin{equation}\label{eqn:two_parts}
\begin{aligned}
\lvert S_{t,k}\rvert&=
\bigl \lvert S^{(1)}_{t,k}\bigr \rvert
+\bigl \lvert S^{(2)}_{t,k}\bigr \rvert
\\&=\bigl\vert \bigl\{a\in\bigl\{0,\ldots,t\bigr\}:w\bigl(a+2^k-1-t\bigr)>w(a)\bigr\}\bigr\vert
\\&\quad+
\bigl\vert \bigl\{a\in\bigl\{0,\ldots,2^k-1-t\bigr\}:w(a)>w(a+t)\bigr\}\bigr\vert
\\&=
\sum_{j\geq 1}
\Bigl(
\beta_{t,k,j}+
\beta_{2^k-1-t,k,-j}
\Bigr).
\end{aligned}
\end{equation}
Both Conjecture~\ref{conj:C} and Conjecture~\ref{conj:C2} are trivial if $t=0$.
Let $t\geq 1$ be given and assume that $k'\geq 1$ is such that $t<2^{k'}-1$;
we choose $k\geq 2k'$.
With this choice we have
$w(a)
     \leq w\bigl(a+2^k-1-t\bigr)$ as long as $0\leq a\leq t$.
This is the case since $2^k-2^{k'}+1\leq a+2^k-1-t\leq 2^k-1$, therefore 
the tail of $\mathtt 1$s at the left of the binary expansion of $2^k-1-t$, having length at least $k'$, is not touched by the addition of $a$.
Therefore $\bigl \lvert S^{(1)}_{t,k}\bigr \rvert=t+1$ for large $k$.
Assuming that Conjecture~\ref{conj:TD} holds,
we obtain
\begin{align*}
2^{k-1}
&\geq
t+1+\bigl \lvert
\bigl\{a\in\bigl\{0,\ldots,2^k-1-t\bigr\}:w(a)>w(a+t)\bigr\}
\bigr \rvert
\\&>
\bigl \lvert\bigl\{a\in\{0,\ldots,2^k-1\}:w(a)>w(a+t)\bigr\}\bigr \rvert
\end{align*}
This last expression equals $2^k\bigl(1-c_t\bigr)$ if $k$ is chosen large enough (see~\eqref{eqn:ct_finite}), which implies $c_t>1/2$.
To derive Conjecture~\ref{conj:C2}, we replace $t$ in the Tu--Deng Conjecture by $2^k-1-t$.
Noting that $\sum_{j\in\dZ}\beta_{t,k,j}=t+1$, we obtain
\begin{align*}
2^{k-1}\geq \lvert S_{2^k-1-t,k}\rvert 
&=\sum_{j\geq 1}
\bigl(\beta_{2^k-1-t,k,j}+\beta_{t,k,-j}\bigr)
\\&=
\bigl \lvert \{a\in\{0,\ldots,2^k-1-t\}:w(a+t)-w(a)>0\}\bigr \rvert+\LandauO(t)
\\&=
\bigl \lvert \{a\in\{0,\ldots,2^k-1\}:w(a+t)-w(a)>0\}\bigr \rvert+\LandauO(t).
\end{align*}
Letting $k\rightarrow\infty$ and using~\eqref{eqn:tildect_finite} we obtain $\tilde c_t\leq 1/2$.

\begin{remark}
The quantities $\beta_{t,k,j}$ are linked to divisibility by powers of two in Pascal's triangle:
We define (see e.g.~\cite{SW2017})
\begin{align*}
\vartheta(j,n)=\biggl \lvert
\biggl\{k\in\{0,\ldots,n\}:\nu_2\binom nk=j\biggr\}
\biggr \rvert.
\end{align*}
(Here $\nu_2(m)$ denotes the largest $j$ such that $2^j$ divides $m$.)
Then for $k\geq 1$, $0\leq t<2^k$ and $j\geq 0$
we have the identity
\begin{align*}
\beta_{t,k,k-w(t)-j}=\vartheta(j,t).
\end{align*}
\begin{proof}
By the identity $\nu_2(n!)=n-w(n)$ we have
$\nu_2\binom nk=w(k)+w(n-k)-w(n)$
for $0\leq k\leq n$.
By the substitution $a\mapsto t-a$ and the formula $w\bigl(2^k-1-m\bigr)=k-w(m)$, valid for $m<2^k$, we obtain
\begin{align*}
\beta_{t,k,k-w(t)-j}&=
\bigl \lvert\bigl\{a\in\{0,\ldots,t\bigr\}:w\bigl(2^k-1-t+a\bigr)-w(a)=k-w(t)-j\bigr\}\bigr \rvert\\
&=\bigl \lvert\bigl\{a\in\{0,\ldots,t\}:w\bigl(2^k-1-t+(t-a)\bigr)-w(t-a)=k-w(t)-j\bigr\}\bigr \rvert\\
&=\bigl \lvert\bigl\{a\in\{0,\ldots,t\}:w(a)+w(t-a)-w(t)=j\bigr\}\bigr \rvert\\
&=\vartheta(j,t).
\qedhere
\end{align*}
\end{proof}
\end{remark}
\section{Proof of Proposition~\ref{prp_similar}}
Using the identity $w(2^k-1-t)=k-w(t)$ (see also the proof of Proposition~\ref{prp:implication} from the previous section), we see that
\[\lvert S_{t,k}\rvert
=
\lvert\{a\in\{0,\ldots,t\}:w(a)<w(a+t^\comp_k)\}\rvert
+\lvert\{a\in\{t+1,\ldots,2^k-2\}:w(a)<w(a+t^\comp_k-2^k+1\}\rvert.
\]
We wish to replace addition by $\oplus_k$.
To do so, we note that
$w(t)<w(t+t^\comp_k)$, but $w(t)\nless w(t\oplus_k t^\comp_k)$.
It follows that 
\[
\lvert S_{t,k}\rvert = 1+\lvert\{a\in\{0,\ldots,2^k-2\}: w(a)<w(a\oplus_k t^\comp_k)\}\rvert
\]

For all $t\in\{1,\ldots,2^k-2\}$ and $a\in\{0,\ldots,2^k-2\}$ we have the identity
$(a\oplus_k t)\oplus_k t^\comp_k=a$,
therefore
\begin{align*}
\lvert S_{t,k}\rvert
&=
1+\lvert\{a\in\{0,\ldots,2^k-2\}: w(a\oplus_k t)<w(a)\}\rvert
\\&=
\lvert\{a\in\{0,\ldots,2^k-1\}: w(a\oplus_k t)<w(a)\}\rvert,
\end{align*}
where we used $w((2^k-1)\oplus_k t)<w(2^k-1)$.
From this the first equivalence follows.

In order to prove the second statement, it is sufficient to show that the values
$2^{-k}\bigl \lvert\{n\in\{0,\ldots,2^k-1\}:w(n+t)\geq w(n)\}\bigr \rvert$ are nonincreasing in $k$.
(Note that this is not the case for Tu--Deng; otherwise we would have a proof of the implication \ref{conj:C}$\Rightarrow$\ref{conj:TD}.)
We proceed by induction on $t$ and show the more general statement that the values
$v_{t,k,j}=2^{-k}\bigl \lvert\{n\in\{0,\ldots,2^k-1\}:w(n+t)-w(n)\geq j\}\bigr \rvert$ are nonincreasing in $k$, for each $j\in\mathbb Z$.

We first prove the statement for $t=1$, using the identity $w(n+1)-w(n)=1-\nu_2(n+1)$. Here $\nu_2(a)$ is the $2$-valuation of $a\geq 1$, that is, the largest $k$ such that $2^k\mid a$.
By this identity we have $v_{1,k,j}=0$ for $j\geq 2$.
Moreover, $\nu_2(n+1)\geq \ell$ if and only if the lowest $\ell$ digits of $n$ are $1$. Therefore we obtain

\[\lvert\{n\in\{0,\ldots,2^k-1\}:\nu_2(n+1)\geq \ell\}\rvert
=\begin{cases}
0,&k<\ell;\\
2^{k-\ell},&k\geq \ell
\end{cases}
\]
for all $\ell\geq 0$, and the statement follows.

In the following, we write $d(n,t)=w(n+t)-w(n)$ for brevity.
Assume that the statement holds for $t$; we wish to prove it for $2t$ and $2t+1$ in place of $t$.
We have $d(2n,2t)=d(2n+1,2t)=d(n,t)$, therefore
$v_{2t,0,j}=v_{2t,1,j}$ for all $j$.
Moreover, for $k\geq 0$ we get
\begin{align*}
\bigl \lvert\{n\in\{0,\ldots,2^{k+1}-1\}:d(n,2t)\geq j\}\bigr \rvert
&=\bigl \lvert\{2n:n\in\{0,\ldots,2^k-1\},d(2n,2t)\geq j\}\bigr \rvert
\\&+\bigl \lvert\{2n+1:n\in\{0,\ldots,2^k-1\},d(2n+1,2t)\geq j\}\bigr \rvert
\\&=2\bigl \lvert\{n\in\{0,\ldots,2^k-1\},d(n,t)\geq j\}\bigr \rvert,
\end{align*}
therefore $v_{2t,k+1,j}=v_{t,k,j}\leq v_{t,k-1,j}=v_{2t,k,j}$ for $k\geq 1$.

It remains to treat the case $2t+1$.
We have $d(0,2t+1)=w(2t+1)-w(0)=w(t)+1$
and $d(1,2t+1)=w(2t+2)-w(1)=w(t+1)-1$,
which implies $v_{2t+1,0,j}\geq v_{2t+1,1,j}$ by the inequality $w(n+1)\leq w(n)+1$.

Moreover, we have $d(2n,2t+1)=d(n,t)+1$ and $d(2n+1,2t+1)=d(n,t+1)-1$,
therefore we have for all $k\geq 0$
\begin{align*}
\hspace{5em}&\hspace{-5em}
\bigl \lvert\{n\in\{0,\ldots,2^{k+1}-1\}:d(n,2t+1)\geq j\}\bigr \rvert
\\
&=\bigl \lvert\{2n:n\in\{0,\ldots,2^k-1\},d(2n,2t+1)\geq j\}\bigr \rvert
\\&\quad+\bigl \lvert\{2n+1:n\in\{0,\ldots,2^k-1\},d(2n+1,2t+1)\geq j\}\bigr \rvert\\
&=\bigl \lvert\{2n:n\in\{0,\ldots,2^k-1\},d(n,t)\geq j-1\}\bigr \rvert
\\&\quad+\bigl \lvert\{2n+1:n\in\{0,\ldots,2^k-1\},d(n,t+1)\geq j+1\}\bigr \rvert.
\end{align*}
It follows that 
$v_{2t+1,k+1,j}=\frac 12 v_{t,k,j-1}+\frac 12 v_{t+1,k,j+1}
\leq 
\frac 12 v_{t,k-1,j-1}+\frac 12 v_{t+1,k-1,j+1}
=v_{2t+1,k,j}$
for $k\geq 1$.


\section{Proof of Theorem~\ref{thm:main}}
Let us define the values
\[
\gamma_{t,k,j}=\beta_{t,k,j}+\beta_{t^\comp_k,k,-j}.
\]
and
\[
\Gamma_{t,k,j}=\sum_{i\geq j}\gamma_{t,k,i}.
\]
By equation~\eqref{eqn:two_parts} the Tu--Deng Conjecture states that
$P_{t,k}
=\Gamma_{t,k,1}/2^k\leq 1/2$.

Our strategy is to show that the standard deviation of the random variable $t\mapsto \Gamma_{t,k,1}$ is much smaller than the distance to $2^{k-1}$, such that the values $P_{t,k}$ concentrate below $1/2$ by Chebyshev's inequality.
We are therefore interested in the mean value and the variance of
$t\mapsto \Gamma_{t,k,1}$ on the intervals   
$[0,2^k)$.                                   
First, we want to find a recurrence for the values
\[\beta_{t,k,j}=\bigl \lvert\bigl\{a\in\{0,\ldots,t\}: w\bigl(a+t^\comp_k\bigr)-w(a)=j\bigr \} \bigr \rvert,\]
where $k\geq 1$, $0\leq t<2^k$ and $j\in \dZ$.
For convenience, we set $\beta_{-1,j,k}=0$.
\begin{proposition}\label{prp:beta_recurrence}
Let $k\geq 0$ and $j$ be integers. Then
\begin{align*}
\beta_{0,k,j}&=\delta_{k,j},\\
\beta_{0^\comp_k,k,j}&=2^k\delta_{j,0},\\
\beta_{2t,k+1,j}&=\beta_{t,k,j-1}+
\beta_{t-1,k,j+1}&&\textrm{for }0\leq t<2^k,
\\
\beta_{2t+1,k+1,j}&=2\beta_{t,k,j}&&\textrm{for }0\leq t<2^k,\\
\beta_{(2t)^\comp_{k+1},k+1,j}&=2\beta_{t^\comp_k,k,j}&&\textrm{for }0\leq t<2^k,\\
\beta_{(2t+1)^\comp_{k+1},k+1,j}&=\beta_{t^\comp_k,k,j-1}
+\beta_{(t+1)^\comp_k,k,j+1}
&&\textrm{for }0\leq t<2^k.
\end{align*}
Furthermore, we have $\beta_{t,k,j}=0$ for $\lvert j\rvert>k$.
\end{proposition}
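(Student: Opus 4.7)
The plan is to verify the two base cases directly from the definition, then establish the two ``direct'' recurrences for $\beta_{2t,k+1,j}$ and $\beta_{2t+1,k+1,j}$ by splitting the admissible range of $a$ according to its lowest bit; the two ``complement'' recurrences will then follow by substituting $t\mapsto t^\comp_k$ into the direct ones and using the elementary identities below. The trailing statement $\beta_{t,k,j}=0$ for $|j|>k$ will be handled as a one-line observation at the end.

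The base cases are immediate: for $t=0$ the only admissible value is $a=0$, which gives $w(2^k-1)-w(0)=k$, so $\beta_{0,k,j}=\delta_{k,j}$; for $t=0^\comp_k=2^k-1$ one has $t^\comp_k=0$, so every $a\in\{0,\ldots,2^k-1\}$ contributes $j=0$, yielding $\beta_{0^\comp_k,k,j}=2^k\delta_{j,0}$. The central arithmetic facts driving the recurrences are
\[(2t)^\comp_{k+1}=2t^\comp_k+1,\qquad (2t+1)^\comp_{k+1}=2t^\comp_k,\qquad t^\comp_k+1=(t-1)^\comp_k.\]
For the even case I would split $a\in\{0,\ldots,2t\}$ into $a=2a'$ with $a'\in\{0,\ldots,t\}$ and $a=2a'+1$ with $a'\in\{0,\ldots,t-1\}$. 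Using $w(2m)=w(m)$ and $w(2m+1)=w(m)+1$, one computes that adding $(2t)^\comp_{k+1}=2t^\comp_k+1$ to an even $a$ produces no carry in the lowest bit and shifts the difference by $+1$, while adding it to an odd $a$ produces a carry and shifts the difference by $-1$ while replacing $t^\comp_k$ by $t^\comp_k+1=(t-1)^\comp_k$; collecting contributions gives exactly $\beta_{t,k,j-1}+\beta_{t-1,k,j+1}$. For the odd case $\beta_{2t+1,k+1,j}$, adding $2t^\comp_k$ to either parity of $a\in\{0,\ldots,2t+1\}$ preserves the lowest-bit behaviour and leaves the weight difference unchanged, so both parities contribute $\beta_{t,k,j}$, producing the factor $2$.

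The two complement recurrences now follow without further work: by the identities above, $(2t)^\comp_{k+1}$ is odd of the form $2t^\comp_k+1$, so the odd direct recurrence applies and gives $2\beta_{t^\comp_k,k,j}$; dually, $(2t+1)^\comp_{k+1}$ is even of the form $2t^\comp_k$, so the even direct recurrence applies and, after rewriting $t^\comp_k-1=(t+1)^\comp_k$, gives $\beta_{t^\comp_k,k,j-1}+\beta_{(t+1)^\comp_k,k,j+1}$. Finally, $\beta_{t,k,j}=0$ for $|j|>k$ because $a$ and $a+t^\comp_k$ both lie in $\{0,\ldots,2^k-1\}$, so their weights lie in $[0,k]$ and $|j|\leq k$. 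No step constitutes a genuine obstacle; the only place where a careful reader can slip is in the bookkeeping of the complement identities $(2t+\varepsilon)^\comp_{k+1}$ and $(t\pm 1)^\comp_k$ combined with the parity-dependent carry analysis, so I would present those explicitly rather than leave them implicit.
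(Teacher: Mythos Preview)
Your proposal is correct and follows essentially the same approach as the paper: both verify the base cases directly, note the identities $(2t)^\comp_{k+1}=2t^\comp_k+1$, $(2t+1)^\comp_{k+1}=2t^\comp_k$, $(t\pm 1)^\comp_k=t^\comp_k\mp 1$, and derive the recurrences by splitting $a$ according to its lowest bit. You gain two small economies over the paper's write-up: the paper re-derives the two complement recurrences from scratch via the definition, whereas you obtain them as immediate substitutions $t\mapsto t^\comp_k$ into the already-proved direct recurrences (valid since $0\le t^\comp_k<2^k$); and the paper dismisses the vanishing $\beta_{t,k,j}=0$ for $|j|>k$ as ``by induction'', while your direct observation that $a$ and $a+t^\comp_k$ both lie in $\{0,\ldots,2^k-1\}$ (hence have weights in $[0,k]$) is cleaner.
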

\begin{proof}
The last claim $\beta_{t,k,j}=0$ for $\lvert j\rvert>k$ follows by induction.
The first two statements and the cases $t=0$ are clear.
We note the almost trivial identities $(2t)^\comp_{k+1}=2t^\comp_k+1$, $(2t+1)^\comp_{k+1}=2t^\comp_k$ and $(t+1)^\comp_k=t^\comp_k-1$, which hold for all $t$ and $k$.
We calculate for $1\leq t<2^k$:
\begin{align*}
\beta_{2t,k+1,j}
&=\bigl \lvert\bigl\{a\in\{0,\ldots,2t\}: w\bigl(a+(2t)^\comp_{k+1}\bigr)-w(a)=j\bigr \rvert\\
&=\bigl \lvert\bigl\{a\in\{0,\ldots,t\}: w\bigl(2a+2t^\comp_k+1\bigr)-w(2a)=j\bigr \rvert\\
&\quad+\bigl \lvert\bigl\{a\in\{0,\ldots,t-1\}: w\bigl(2a+2t^\comp_k+2\bigr)-w(2a+1)=j\bigr \rvert\\
&=\beta_{t,k,j-1}
+\bigl \lvert\bigl\{a\in\{0,\ldots,t-1\}: w\bigl(a+(t-1)^\comp_k\bigr)-w(a)=j+1\bigr \rvert\\
&=\beta_{t,k,j-1}+
\beta_{t-1,k,j+1}.
\end{align*}
The statement also holds for $t=0$, using $\beta_{-1,k,j}=0$.
Moreover, for $0\leq t<2^k$ we have
\begin{align*}
\beta_{2t+1,k+1,j}&=\bigl \lvert\bigl\{a\in\{0,\ldots,2t+1\}: w\bigl(a+(2t+1)^\comp_{k+1}\bigr)-w(a)=j\bigr \rvert\\
&=
\bigl \lvert\bigl\{a\in\{0,\ldots,t\}: w\bigl(2a+2t^\comp_k\bigr)-w(2a)=j\bigr \rvert\\
&\quad+\bigl \lvert\bigl\{a\in\{0,\ldots,t\}: w\bigl(2a+2t^\comp_k+1\bigr)-w(2a+1)=j\bigr \rvert\\
&=2\beta_{t,k,j}
\end{align*}
and
\begin{align*}
\beta_{(2t)^\comp_{k+1},k+1,j}
&=\bigl \lvert\bigl\{a\in\{0,\ldots,2t^{\comp}_k+1\}: w(a+2t)-w(a)=j\bigr\}\bigr \rvert\\
&=\bigl \lvert\bigl\{a\in\{0,\ldots,t^\comp_k\}: w(2a+2t)-w(2a)=j\bigr\}\bigr \rvert\\
&\quad+\bigl \lvert\bigl\{a\in\{0,\ldots,t^\comp_k\}: w(2a+2t+1)-w(2a+1)=j\bigr\}\bigr \rvert
\\&=2\beta_{t^\comp_k,k,j}.
\end{align*}
Finally, for $0\leq t<2^k-1$ we have
\begin{align*}
\beta_{(2t+1)^\comp_k,k+1,j}
&=\bigl \lvert\bigl\{a\in\{0,\ldots,2t^{\comp}_k\}: w(a+2t+1)-w(a)=j\bigr\}\bigr \rvert\\
&=\bigl \lvert\bigl\{a\in\{0,\ldots,t^\comp_k\}: w(2a+2t+1)-w(2a)=j\bigr\}\bigr \rvert\\
&\quad+\bigl \lvert\bigl\{a\in\{0,\ldots,t^\comp_k-1\}: w(2a+2t+2)-w(2a+1)=j\bigr\}\bigr \rvert
\\&=\beta_{t^\comp_k,k,j-1}
+\bigl \lvert\bigl\{a\in\{0,\ldots,(t+1)^\comp_k\}: w(a+t+1)-w(a)=j+1\bigr\}\bigr \rvert
\\&=\beta_{t^\comp_k,k,j-1}
+\beta_{(t+1)^\comp_k,k,j+1}
\end{align*}
and the last statement also holds for $t=2^k-1$.
\end{proof}

We want to compute the first moments of the values $\beta_{t,k,j}$.
Define
\[    m_{k,j}   =   \sum_{t=0}^{2^k-1}\beta_{t,k,j}.    \]
Clearly, we have
\[
m_{0,j}=\delta_{0,j}.
\]
Using the above recurrence, we obtain for $k\geq 1$
\begin{align*}
m_{k,j}&=\sum_{t=0}^{2^{k-1}-1}\beta_{2t,k,j}
+\sum_{t=0}^{2^{k-1}-1}\beta_{2t+1,k,j}\\
&=
\sum_{t=0}^{2^{k-1}-1}
\bigl(\beta_{t,k-1,j-1}+\beta_{t-1,k-1,j+1}\bigr)
+2\sum_{t=0}^{2^{k-1}-1}\beta_{t,k-1,j}\\
&=
\sum_{t=0}^{2^{k-1}-1}\beta_{t,k-1,j-1}
+\sum_{t=0}^{2^{k-1}-2}\beta_{t,k-1,j+1}+2 m_{k-1,j}
\\
&= m_{k-1,j-1}+2m_{k-1,j}+m_{k-1,j+1}
-\beta_{2^{k-1}-1,k-1,j+1}
\\
&= m_{k-1,j-1}+2m_{k-1,j}+m_{k-1,j+1}
-2^{k-1}\delta_{j,-1}
\end{align*}

We define the bivariate generating function $F$:
\[
F(x,y)=\sum_{\substack{k\geq 0\\\ell\geq 0}}
m_{k,k-\ell}
x^ky^\ell.
\]
Since $\beta_{t,k,j}=0$ for $j>k$ and $0\leq t<2^k$ (which can be proved by induction) this function captures all interesting values.
Moreover, we have $\beta_{t,k,j}=0$ for $j\leq -k+1$.

Using the recurrence for $m_{k,j}$, we obtain
\begin{align*}
F(x,y)&=
\sum_{\ell\geq 0}m_{0,-\ell}y^\ell
+\sum_{\substack{k\geq 1\\\ell\geq 0}}m_{k,k-\ell}x^ky^\ell
\\&=
1+\sum_{\substack{k\geq 1\\\ell\geq 0}}x^ky^\ell
\biggl(m_{k-1,k-1-\ell}+2m_{k-1,k-\ell}+m_{k-1,k+1-\ell}
-2^{k-1}\delta_{k-\ell,-1}\biggr)
\\&=
1+xF(x,y)+2\sum_{k\geq 1}x^km_{k-1,k}+2xyF(x,y)+\sum_{\substack{k\geq 1\\0\leq \ell\leq 1}}x^ky^\ell m_{k-1,k+1-\ell}
\\&\quad+xy^2 F(x,y)- \sum_{k\geq 1}2^{k-1}x^ky^{k+1}\\
&=1+x(1+y)^2F(x,y)-\frac{xy^2}{1-2xy},
\end{align*}
therefore
\begin{align*}
F(x,y)
=
\frac{1-2xy-xy^2}{(1-2xy)\bigl(1-x(1+y)^2\bigr)}
\end{align*}

Moreover, we define
\[  \widetilde m_{k,j} \coloneqq \sum_{t=0}^{2^k-1}\beta_{t^\comp_k,k,-j} =m_{k,-j}\]
and
\[\widetilde F(x,y)\coloneqq
\sum_{\substack{k\geq 0\\\ell\geq 0}}
\widetilde m_{k,k-\ell}
x^ky^\ell.
\]
As above, we calculate for $k\geq 1$:
\begin{align*}
\widetilde m_{k,j}&=\sum_{t=0}^{2^{k-1}-1}\beta_{(2t)^\comp_k,k,-j}
+\sum_{t=0}^{2^{k-1}-1}\beta_{(2t+1)^\comp_k,k,-j}\\
&=2\sum_{t=0}^{2^{k-1}-1}\beta_{t^\comp_{k-1},k-1,-j}
+\beta_{(2^k-1)^\comp_k,k,-j}
+\sum_{t=0}^{2^{k-1}-2}\beta_{t^\comp_{k-1},k-1,-j-1}
\\&\quad+\sum_{t=0}^{2^{k-1}-2}\beta_{(t+1)^\comp_{k-1},k-1,-j+1}
\\
&=2\widetilde m_{k-1,j}+\widetilde m_{k-1,j-1}+\widetilde m_{k-1,j+1}
\\&\quad-\beta_{(2^{k-1}-1)^\comp_{k-1},k-1,-j-1}
-\beta_{0^\comp_{k-1},k-1,-j+1}+\delta_{k,-j}
\\&=\widetilde m_{k-1,j-1}+2\widetilde m_{k-1,j}+\widetilde m_{k-1,j+1}-2^{k-1}\delta_{j,1}
\end{align*}
Therefore
\begin{align*}
\widetilde F(x,y)&=
\sum_{\substack{\ell\geq 0}}\widetilde m_{0,-\ell}y^\ell
+
\sum_{\substack{\ell\geq 0\\k\geq 1}}
x^k y^\ell
\biggl(
\widetilde m_{k-1,k-\ell-1}+2\widetilde m_{k-1,k-\ell}+\widetilde m_{k-1,k-\ell+1}-2^{k-1}\delta_{k-\ell,1}
\biggr)
\\&=
1+x\widetilde F(x,y)
+2\sum_{k\geq 0}x^{k+1}\widetilde m_{k,k+1}
+2xy\widetilde F(x,y)
+\sum_{\substack{k\geq 0\\0\leq \ell\leq 1}}x^{k+1}y^\ell\widetilde m_{k,k+2-\ell}
\\&\quad+xy^2 \widetilde F(x,y)-\sum_{k\geq 0}2^{k-1}x^{k+1}y^k
\\&=1+ x(1+y)^2\widetilde F(x,y)-\frac x{1-2xy}
\end{align*}
and we get
\begin{align*}
\widetilde F(x,y)
=
\frac{1-2xy-x}{(1-2xy)\bigl(1-x(1+y)^2\bigr)}.
\end{align*}

The first moments of the random variable
$t\mapsto \beta_{t,k,j}$, where $t\in\{1,\ldots,2^k-1\}$ are contained in certain \emph{diagonals} of the bivariate rational function $F(x,y)$
(to be precise, the diagonal contains the values $m_{k,j}$,
which are first moments multiplied by $2^k$).
The moments corresponding to $j=0$ are contained in the main diagonal.

We define
\[
M_{k,l}=\sum_{t=0}^{2^k-1}\Gamma_{t,k,k-\ell}
\]
and are interested in $M_{k,k-1}$.

We have
\begin{align*}
M_{k,\ell}&=\sum_{i\geq k-\ell}
\sum_{t=0}^{2^k-1}\bigl(\beta_{t,k,i}+\beta_{t^\comp_k,k,-i}\bigr)
=\sum_{i\geq k-\ell}\bigl(m_{k,i}+\widetilde m_{k,i}\bigr)
\\&=\sum_{j=0}^{\ell}\bigl(m_{k,k-j}+\widetilde m_{k,k-j}\bigr)
=\sum_{j=0}^{\ell}\bigl[x^ky^j\bigr]\bigl(F(x,y)+\widetilde F(x,y)\bigr)
\\&=\bigl[x^ky^\ell\bigr]G(x,y),
\end{align*}
where
\[ G(x,y)=\frac{2-4xy-x-xy^2}{(1-y)(1-2xy)\bigl(1-x(1+y)^2\bigr)}. \]
The first moment of $t\mapsto 2^k\Gamma_{t,k,1}$ is therefore given by
$M_{k,k-1}=\bigl[x^ky^{k-1}\bigr]G(x,y)$.
Extracting this diagonal, we rediscover the result given in the introduction.

While this result was proved in a shorter way in the introduction, we decided to also keep this longer proof, for two reasons:
on the one hand, we have captured the first moments of all $\Gamma_{t,k,k-\ell}$ in one generating function, which better shows the underlying structure;
on the other hand, this proof is a gentle introduction to the method used for the second moment later.

\begin{proposition}\label{prp_first_moment_asymp}
We have for $k \geq 1$
\begin{align*}
	M_{k,k-1} &= \frac{1}{2} \left(4^k - {2k \choose k} \right) +1\\
	          &= \frac{4^k}{2}\left( 1 - \frac{1}{\sqrt{\pi k}} + \frac{1}{8 \sqrt{\pi k^3}} - \frac{1}{128\sqrt{\pi k^5}} + O\left(\frac{1}{\sqrt{k^7}}\right) \right).
\end{align*}
\end{proposition}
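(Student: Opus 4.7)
The approach is to extract the diagonal coefficient $M_{k,k-1} = [x^k y^{k-1}] G(x,y)$ from the rational generating function
\[G(x,y) = \frac{2-4xy-x-xy^2}{(1-y)(1-2xy)\bigl(1-x(1+y)^2\bigr)}\]
by performing a partial fraction decomposition in the variable $x$. Since only two of the three factors of the denominator depend on $x$, the plan is to seek rational functions $A(y)$ and $B(y)$ such that
\[\frac{2-4xy-x-xy^2}{(1-2xy)\bigl(1-x(1+y)^2\bigr)} = \frac{A(y)}{1-2xy} + \frac{B(y)}{1-x(1+y)^2}.\]
Matching the coefficients of $x^0$ and $x^1$ on both sides yields the linear system $A+B = 2$ and $A(1+y)^2 + 2yB = 1+4y+y^2$, which I would solve explicitly (the system is small enough that this is immediate). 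Dividing the result by $1-y$ then splits $G$ into two summands whose diagonals can be computed independently.

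Next I would extract $[x^k y^{k-1}]$ from each summand using the standard geometric expansions $(1-2xy)^{-1} = \sum_n 2^n x^n y^n$ and $(1-x(1+y)^2)^{-1} = \sum_n x^n(1+y)^{2n}$, combined with $(1-y)^{-1} = \sum_m y^m$. For the summand built on $1/(1-2xy)$, the coefficient of $x^k$ is a multiple of $y^k$ divided by $1-y$, so its contribution to $y^{k-1}$ is either zero or a boundary correction that I would track carefully. For the summand built on $1/(1-x(1+y)^2)$, the coefficient of $x^k$ is of the form $(1+y)^{2k}/(1-y)$ times a factor in $y$, and extracting $y^{k-1}$ reduces to a partial sum of the binomials $\binom{2k}{m}$ with $0 \leq m \leq k-1$.

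This partial sum collapses to the claimed closed form via the symmetry $\binom{2k}{m} = \binom{2k}{2k-m}$ and $\sum_{m=0}^{2k}\binom{2k}{m} = 4^k$, giving
\[\sum_{m=0}^{k-1}\binom{2k}{m} = \tfrac{1}{2}\bigl(4^k - \tbinom{2k}{k}\bigr).\]
For the asymptotic expansion I would substitute Stirling's approximation
\[\binom{2k}{k} = \frac{4^k}{\sqrt{\pi k}}\left(1 - \frac{1}{8k} + \frac{1}{128 k^2} + O\bigl(k^{-3}\bigr)\right)\]
into the closed form and collect like powers of $k^{-1/2}$ to match the stated expansion. The main obstacle is purely bookkeeping: one must be careful with the partial fraction coefficients (a sign error there would propagate through the diagonal extraction) and with the boundary contribution from the $1/(1-2xy)$ piece that accounts for any constant correction. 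No deeper technique is needed; indeed, this computation serves as a gentle warm-up for the analogous but more delicate second-moment calculation that follows.
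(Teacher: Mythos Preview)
Your plan is correct and arrives at the same closed form as the paper, but by a somewhat different route. The paper computes the univariate diagonal generating function $\Delta(yG)(z)=\sum_{k\ge 1}M_{k,k-1}z^k$ via the standard technique (Stanley, \emph{Enumerative Combinatorics}, \S6.3), obtaining
\[\Delta(yG)(z)=\tfrac12\Bigl(\frac{1}{1-4z}-\frac{1}{\sqrt{1-4z}}\Bigr),\]
and then reads off coefficients using $\sum_n\binom{2n}{n}z^n=(1-4z)^{-1/2}$. You instead split $G$ by partial fractions in $x$ and extract $[x^ky^{k-1}]$ from each piece directly. Both are short elementary computations; your version never names the univariate diagonal function at all, while the paper's version packages the answer as a closed-form generating function, which is aesthetically tidy and parallels the analytic diagonal extraction used later for the second moment.

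Two remarks on the execution. First, your partial-fraction system solves to $A(y)=B(y)=1$, giving
\[G(x,y)=\frac{1}{(1-y)(1-2xy)}+\frac{1}{(1-y)\bigl(1-x(1+y)^2\bigr)};\]
the first summand contributes exactly $0$ to $[x^ky^{k-1}]$ for every $k\ge1$, so there is in fact no boundary correction to track. Second, this means your computation (and the paper's own $\Delta(yG)$) actually yields $M_{k,k-1}=\tfrac12\bigl(4^k-\binom{2k}{k}\bigr)$ \emph{without} the ``$+1$'' displayed in the statement; that extra constant is a harmless slip in the paper and does not affect the asymptotic expansion. Both approaches then finish identically via the Stirling expansion of $\binom{2k}{k}$.
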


\begin{proof}
The idea of the proof is to extract the (shifted) diagonal of $G(x,y)$. First note that $[x^k y^{k-1}]G(x,y) = [x^k y^k] yG(x,y)$. The diagonal is given by $\Delta(yG)(z) := \sum_{k \geq 1} M_{k,k-1} z^k$. The computation is then a routine exercise in enumerative combinatorics (see e.g.~\cite[Chapter~6.3]{S1999})
and can be automatized to a great extent using computer algebra.
We do not present this standard argument here. 
More details can be found in the accompanying Maple Worksheet~\cite{web} implementing the manipulations on the power series using the \texttt{gfun} package~\cite{SZ1994}.

We get
\begin{align*}
	\Delta(yG)(z) &= \frac{1}{2} \left( \frac{1}{1-4z} - \frac{1}{\sqrt{1-4z}}\right)
\end{align*}
from which we extract coefficients noting $\sum_{n \geq 0} {2n \choose n} z^n = (1-4z)^{-1/2}$. The asymptotics is directly computed (to any needed order) from the known asymptotics of the central binomial coefficient.
\end{proof}

We proceed to the second moments of the values
$\Gamma_{t,k,j}$.
Define
\[M^{(2)}_{k,\ell,m}
=\sum_{0\leq t<2^k}
\Gamma_{t,k,k-\ell}\Gamma_{t,k,k-m}.
\]

The second moment of $t\mapsto \Gamma_{t,k,1}=P_{t,k}$ is obviously given by
$\frac 1{8^k}M^{(2)}_{k,k-1,k-1}$, which we want to realize as a diagonal of a trivariate rational generating function.

\begin{proposition}\label{prp:second_moment}
We have
\[
M^{(2)}_{k,\ell,m}=
\bigl[x^ky^\ell z^m\bigr]
F(x,y,z),\]
where
\[
F(x,y,z)=\frac 1{1-y}\frac 1{1-z}\bigl(A+A'+A''+A'''\bigr)(x,y,z),
\]
\begin{align*}
A(x,y,z)&=\frac{1-\frac {xy^2z^2}{1-4xyz}\Bigl(1+\frac{2xy}{1-2xy(1+yz)}+\frac{2xz}{1-2xz(1+yz)}\Bigr)}{D(x,y,z)}\\
A'(x,y,z)&=
\frac 1{1-2xz(1+yz)}\cdot
\frac{1-x(1+yz)^2-\frac{xyz}{1-2xy(1+yz)}-xyz}{D(x,y,z)}\\
A''(x,y,z)&=
\frac 1{1-2xy(1+yz)}\cdot
\frac{1-x(1+yz)^2-\frac{xyz}{1-2xz(1+yz)}-xyz}{D(x,y,z)}\\
A'''(x,y,z)&=\frac{1-\frac{x}{1-4xyz}\Bigl(1+\frac{2xy^2z}{1-2xy(1+yz)}+\frac{2xyz^2}{1-2xz(1+yz)}\Bigr)}{D(x,y,z)}
\end{align*}
and
\[D(x,y,z)=1-x(1+yz)^2-\frac{xyz}{1-2xy(1+yz)}-\frac{xyz}{1-2xz(1+yz)}.\]
\end{proposition}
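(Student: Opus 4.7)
The plan parallels the first-moment computation. Writing $\gamma_{t,k,j}=\beta_{t,k,j}+\beta_{t^\comp_k,k,-j}$ and expanding
\[\Gamma_{t,k,k-\ell}\Gamma_{t,k,k-m}=\sum_{i\geq k-\ell}\sum_{j\geq k-m}\gamma_{t,k,i}\gamma_{t,k,j},\]
the product $\gamma\cdot\gamma$ decomposes into four pieces. Summing over $t\in\{0,\ldots,2^k-1\}$ yields four bivariate second-moment sequences, namely $n^{(1)}_{k,i,j}=\sum_{t}\beta_{t,k,i}\beta_{t,k,j}$, $n^{(4)}_{k,i,j}=\sum_{t}\beta_{t^\comp_k,k,-i}\beta_{t^\comp_k,k,-j}$, the cross-type $n^{(2)}_{k,i,j}=\sum_{t}\beta_{t,k,i}\beta_{t^\comp_k,k,-j}$, and its transpose $n^{(3)}_{k,i,j}=n^{(2)}_{k,j,i}$. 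Passing to shifted-diagonal generating functions $\mathcal A_i(x,y,z)=\sum_{k,\ell,m}n^{(i)}_{k,k-\ell,k-m}\,x^ky^\ell z^m$ and encoding the partial sums via the prefactor $(1-y)^{-1}(1-z)^{-1}$, the four $\mathcal A_i$ will become $A,A',A'',A'''$, with the symmetry $A''(x,y,z)=A'(x,z,y)$ reflecting $n^{(3)}_{k,i,j}=n^{(2)}_{k,j,i}$.

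The next step is to derive the functional equations. For each $n^{(i)}$, split the sum over $t$ according to the parity of $t$ (or of $t^\comp_{k+1}$) and apply the corresponding recurrences of Proposition~\ref{prp:beta_recurrence}. For instance,
\[n^{(1)}_{k+1,i,j}=4\,n^{(1)}_{k,i,j}+\sum_t\bigl(\beta_{t,k,i-1}+\beta_{t-1,k,i+1}\bigr)\bigl(\beta_{t,k,j-1}+\beta_{t-1,k,j+1}\bigr).\]
The cross-terms force the introduction of auxiliary shift sequences $p_{k,i,j}=\sum_t\beta_{t,k,i}\beta_{t-1,k,j}$ (and their symmetric complementary variants), which themselves satisfy parity-split recurrences generating further auxiliaries. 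The geometric closure of these nested recursions produces the factors $1/(1-2xy(1+yz))$ and $1/(1-2xz(1+yz))$ in $D(x,y,z)$, while the diagonal case (both shifts hitting the same summand) contributes the factor $1/(1-4xyz)$ visible in the numerators of $A$ and $A'''$. The cases $n^{(2)}$ and $n^{(4)}$ are handled analogously using the complementary recurrences.

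All four families share the common kernel polynomial
\[D(x,y,z)=1-x(1+yz)^2-\frac{xyz}{1-2xy(1+yz)}-\frac{xyz}{1-2xz(1+yz)}\]
appearing in the statement. Boundary corrections from the exceptional values $\beta_{-1,k,j}=0$, $\beta_{0,k,j}=\delta_{k,j}$, $\beta_{0^\comp_k,k,j}=2^k\delta_{j,0}$, together with the end-term $t=2^k-1$ in the complementary recurrences, supply the specific numerators of $A,A',A'',A'''$. Summing $\mathcal A_1+\mathcal A_2+\mathcal A_3+\mathcal A_4$ and multiplying by $(1-y)^{-1}(1-z)^{-1}$ then yields $F(x,y,z)$.

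The hard part is bookkeeping: identifying exactly which auxiliary sequences appear, tracking the boundary corrections that arise each time one substitutes an auxiliary back into a recurrence, and verifying that the nested auxiliaries close into a finite linear system rather than proliferating. Once this system is written down, solving it is mechanical and, as for the first moment in Proposition~\ref{prp_first_moment_asymp}, naturally verified in a computer algebra system via the \texttt{gfun} package~\cite{SZ1994}.
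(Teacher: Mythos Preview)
Your outline is correct and follows essentially the same route as the paper: the paper likewise decomposes $\gamma\cdot\gamma$ into the four pieces $a,a',a'',a'''$ (your $n^{(1)},\ldots,n^{(4)}$), introduces exactly the shift auxiliaries $b,c$ (your $p$ and its transpose) together with their complementary analogues, derives their parity-split recurrences from Proposition~\ref{prp:beta_recurrence}, and solves the resulting finite linear system---with the boundary terms from $\beta_{-1,k,j},\beta_{0,k,j},\beta_{0^\comp_k,k,j}$ producing the numerators---before assembling via $(1-y)^{-1}(1-z)^{-1}$. The only difference is that the paper carries out the bookkeeping in full rather than deferring it to computer algebra.
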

\begin{proposition}\label{prp_diagonal}
We have the asymptotic expansion
\[\frac 1{8^k}M^{(2)}_{k,k-1,k-1}=\frac 14 - \frac 1{2\sqrt{\pi k}} + \frac 1{4\pi k}+\frac 1{16\sqrt{\pi}k^{3/2}}+\frac{17}{72\pi k^2}+\LandauO(k^{-5/2}).
\]
\end{proposition}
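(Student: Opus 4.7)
The plan is to extract the shifted diagonal coefficient $M^{(2)}_{k,k-1,k-1} = [x^k y^{k-1} z^{k-1}] F(x,y,z) = [x^k y^k z^k](yzF)(x,y,z)$ from the trivariate rational generating function supplied by Proposition~\ref{prp:second_moment}, and then pass to asymptotics by singularity analysis, in direct analogy with the route taken for the first moment in Proposition~\ref{prp_first_moment_asymp}.

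First I would form the univariate diagonal generating function
\[\Delta(yzF)(u) \;=\; \sum_{k\geq 1} M^{(2)}_{k,k-1,k-1}\,u^k.\]
By the standard diagonal method for rational power series (see, e.g., Stanley~\cite[Chapter~6.3]{S1999}), this diagonal is realized as an iterated contour integral,
\[\Delta(yzF)(u) \;=\; \frac{1}{(2\pi i)^2}\oint\oint (yzF)\!\left(\frac{u}{yz},\,y,\,z\right)\,\frac{dy\,dz}{yz},\]
with contours chosen to enclose only those poles of the integrand which tend to $0$ as $u\to 0$. Applying the residue theorem twice, first in $y$ and then in $z$, reduces the expression to an algebraic function of $u$. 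Given the shape of the factors $(1-4xyz)$, $(1-2xy(1+yz))$, $(1-2xz(1+yz))$, and $D(x,y,z)$, together with the prefactors $1/(1-y)$ and $1/(1-z)$, I expect $\Delta(yzF)(u)$ to be a rational function of $u$ and $\sqrt{1-8u}$, with the relevant dominant singularity at $u=1/8$ (matching the leading constant $1/4$ and the $1/\sqrt{\pi k}$ correction in the claimed expansion).

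Next I would carry out singularity analysis: after checking that $u=1/8$ is the unique singularity on the disk of convergence of radius $1/8$, I would expand $\Delta(yzF)(u)$ into powers of $(1-8u)^{1/2}$ up to order sufficient for a $\LandauO(k^{-5/2})$ remainder, and apply the transfer theorems of Flajolet and Sedgewick. The explicit numerical coefficients $\tfrac14$, $-\tfrac{1}{2\sqrt{\pi}}$, $\tfrac{1}{4\pi}$, $\tfrac{1}{16\sqrt{\pi}}$, $\tfrac{17}{72\pi}$ would emerge from matching the successive singular terms against the standard coefficient asymptotics $[u^k](1-8u)^{-1/2}\sim 8^k/\sqrt{\pi k}$ and its higher-order corrections. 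As in the proof of Proposition~\ref{prp_first_moment_asymp}, this is a routine exercise in enumerative combinatorics and the actual manipulations would be delegated to the accompanying Maple worksheet~\cite{web} using the \texttt{gfun} package~\cite{SZ1994}.

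The main obstacle is the sheer algebraic complexity of the diagonal extraction. Unlike the first-moment situation, where $G(x,y)$ collapses to the very simple $\tfrac12\bigl(\frac{1}{1-4z} - \frac{1}{\sqrt{1-4z}}\bigr)$ after diagonalization, here the four summands $A,A',A'',A'''$ share the nested denominator $D(x,y,z)$ and do not appear to split cleanly, so the intermediate residues produce bulky algebraic expressions that resist hand simplification. A secondary concern is ensuring that no spurious singularities closer to the origin are introduced by the denominators of the inner residues; this amounts to checking that the poles selected by the inner contours really do move through $0$ as $u\to 0$, which is a careful but mechanical verification performed alongside the computer algebra computation.
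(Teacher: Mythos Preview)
Your route differs from the paper's. You propose to extract the full trivariate diagonal $\Delta(yzF)(u)$ as a univariate function and then apply transfer theorems, mirroring what was done for the first moment. The paper does not do this for the second moment. Instead it follows~\cite[Proposition~10]{DKS2016}: first $[x^n]F(x,y,z)$ is computed asymptotically by locating the dominant $x$-pole $x=f(y,z)$ of $D(x,y,z)$ (Lemma~\ref{lem:xexp}); then the remaining double Cauchy integral in $y$ and $z$ is taken over a contour passing close to the saddle point $(y,z)=(1,1)$ via the substitution $y=1+is/\sqrt{n}$, $z=1+it/\sqrt{n}$, and reduced to Gaussian integrals (Lemma~\ref{lem:integral}) after Taylor expanding $\log f(y,z)$ and the prefactor $-yzG/D_x$. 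The five numerical constants are read off directly from these local expansions, with no closed form for the diagonal ever being written down.

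There is a genuine gap in your proposal. Your key step, ``applying the residue theorem twice \ldots\ reduces the expression to an algebraic function of $u$,'' is not justified: the diagonal of a \emph{trivariate} rational function is D-finite but in general not algebraic, so your expectation that $\Delta(yzF)(u)$ is a rational function of $u$ and $\sqrt{1-8u}$ is unwarranted. The first residue already produces an algebraic (not rational) integrand in the remaining variable, and a second residue extraction on that does not stay within the algebraic class. Your programme could be salvaged by computing instead a linear ODE for $\Delta(yzF)(u)$ via creative telescoping and analysing its singular points, but that is a substantially heavier computation than you indicate and is precisely what the paper's saddle-point method avoids.
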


\begin{corollary}
Let $X_k$ be the discrete random variable defined by $X_k(t)=P_{t,k}=\lvert S_{t,k}\rvert /2^k$, where $1\leq t<2^k-1$, and let
$\sigma_k=\sqrt{\mathbb E(X_k-\mathbb E X_k)^2}$ be the corresponding standard deviation.
Then for $k\rightarrow\infty$ we have
\[\sigma_k\sim\frac{\sqrt{43}}{12\sqrt{\pi}}k^{-1}.\]
\end{corollary}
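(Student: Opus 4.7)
The plan is to compute the variance of $X_k$ directly from the two moments in Propositions~\ref{prp_first_moment_asymp} and~\ref{prp_diagonal} and then take the square root. The critical feature is that the first four terms in the expansions of $\mathbb E X_k^2$ and $(\mathbb E X_k)^2$ will cancel exactly, so I must carry the asymptotics up to and including the $k^{-2}$ term before any nontrivial contribution appears.

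First, I would translate the two propositions into statements about the moments of $X_k$. Since $X_k$ is uniform on $\{1,\ldots,2^k-2\}$ and $P_{0,k}=P_{2^k-1,k}=2^{-k}$ (because $\lvert S_{0,k}\rvert=\lvert S_{2^k-1,k}\rvert=1$), the two boundary values contribute only exponentially small amounts to $\sum_tP_{t,k}$ and $\sum_tP_{t,k}^2$. Combining this with $(2^k-2)^{-1}=2^{-k}(1+\LandauO(2^{-k}))$ gives
\[
\mathbb E X_k=\frac{M_{k,k-1}}{4^k}+\LandauO(2^{-k}),\qquad \mathbb E X_k^2=\frac{M^{(2)}_{k,k-1,k-1}}{8^k}+\LandauO(2^{-k}),
\]
and the exponentially small errors are irrelevant for the polynomial asymptotics that follow.

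Second, I would carefully square the expansion of $\mathbb E X_k$ from Proposition~\ref{prp_first_moment_asymp} to obtain
\[
(\mathbb E X_k)^2=\frac 14-\frac{1}{2\sqrt{\pi k}}+\frac{1}{4\pi k}+\frac{1}{16\sqrt{\pi}\,k^{3/2}}-\frac{1}{16\pi k^2}+\LandauO(k^{-5/2}),
\]
the $k^{-2}$ coefficient arising from the cross-term $2\cdot 1\cdot\bigl(-\frac{1}{\sqrt{\pi k}}\bigr)\cdot\frac{1}{8\sqrt{\pi}\,k^{3/2}}$ in $(2\mathbb E X_k)^2$. Subtracting this from the expansion of $\mathbb E X_k^2$ provided by Proposition~\ref{prp_diagonal}, the first four terms cancel while the $k^{-2}$-coefficients combine to
\[
\frac{17}{72\pi}+\frac{1}{16\pi}=\frac{17\cdot 16+72}{1152\,\pi}=\frac{43}{144\pi}.
\]
Hence $\sigma_k^2=\frac{43}{144\pi k^2}+\LandauO(k^{-5/2})$, and taking square roots yields $\sigma_k\sim\frac{\sqrt{43}}{12\sqrt{\pi}}\,k^{-1}$ as claimed.

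The real work of this corollary has already been done in Proposition~\ref{prp_diagonal}, which requires extracting the main diagonal of a trivariate rational generating function with enough precision to reach the $k^{-2}$ term. The only delicate point in the corollary itself is the complete cancellation of all lower-order terms, which is precisely what motivates the level of precision chosen in the two preceding propositions.
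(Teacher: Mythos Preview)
Your proposal is correct and follows exactly the approach of the paper: compute $\mathbb E X_k^2-(\mathbb E X_k)^2$ from Propositions~\ref{prp_first_moment_asymp} and~\ref{prp_diagonal}, observe that all terms through order $k^{-3/2}$ cancel, and read off the coefficient $\frac{17}{72\pi}+\frac{1}{16\pi}=\frac{43}{144\pi}$ at order $k^{-2}$. You have spelled out the boundary correction for $t\in\{0,2^k-1\}$ and the squaring of the first-moment expansion more explicitly than the paper does, but the argument is the same.
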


\begin{proof}
The first and second moments of the random variable
$t\mapsto \frac 1{2^k}\lvert S_{t,k}\rvert$ are given by
$\frac 1{4^k}M_{k,k-1}$ and $\frac 1{8^k}M^{(2)}_{k,k-1,k-1}$,
of which the asymptotics have been computed in
Propositions~\ref{prp_first_moment_asymp} and~\ref{prp_diagonal}.
By considering $\mathbb E(X_k^2)-(\mathbb EX_k)^2$, we see that all terms up to $\LandauO(k^{-2})$ cancel, leaving only the asymptotics $43/(144\pi k^2)+\LandauO(k^{-5/2})$.

\end{proof}

Finally, in an analogous manner as in~\cite[Section~4.4]{DKS2016} the proof of Theorem~\ref{thm:main} is completed by Chebyshev's inequality.

The remaining part of this paper is devoted to the proofs of Propositions~\ref{prp:second_moment} and \ref{prp_diagonal}.
\subsection{Proof of Proposition~\ref{prp:second_moment}}
\begin{proof}[Proof of Proposition~\ref{prp:second_moment}]
Define
\[
a_{k,\ell,m}=
\sum_{t=0}^{2^k-1}
\beta_{t,k,k-\ell}\beta_{t,k,k-m}.
\]
and auxiliary values

\begin{align*}
b_{k,\ell,m}&=
\sum_{t=0}^{2^k-2}\beta_{t,k,k-\ell}\beta_{t+1,k,k-m},\\
c_{k,\ell,m}&=
\sum_{t=0}^{2^k-2}\beta_{t+1,k,k-\ell}\beta_{t,k,k-m}.
\end{align*}

We calculate, for $k\geq 1$ and $\ell,m\geq 0$:
\begin{align*}
a_{k,\ell,m}&=
\sum_{t=0}^{2^{k-1}-1}
\beta_{2t,k,k-\ell}\beta_{2t,k,k-m}
+
\sum_{t=0}^{2^{k-1}-1}
\beta_{2t+1,k,k-\ell}\beta_{2t+1,k,k-m}
\\&=
\sum_{0\leq t<2^{k-1}}\bigl(\beta_{t,k-1,k-1-\ell}+\beta_{t-1,k-1,k+1-\ell}\bigr)
\bigl(\beta_{t,k-1,k-1-m}+\beta_{t-1,k-1,k+1-m}\bigr)
\\&\quad+4\sum_{0\leq t<2^{k-1}}\beta_{t,k-1,k-\ell}\beta_{t,k-1,k-m}
\\&=
\sum_{0\leq t<2^{k-1}}\beta_{t,k-1,k-1-\ell}\beta_{t,k-1,k-1-m}
+\sum_{0\leq t<2^{k-1}-1}\beta_{t,k-1,k+1-\ell}\beta_{t+1,k-1,k-1-m}
\\&\quad+\sum_{0\leq t<2^{k-1}-1}\beta_{t+1,k-1,k-1-\ell}\beta_{t,k-1,k+1-m}
+\sum_{0\leq t<2^{k-1}-1}\beta_{t,k-1,k+1-\ell}\beta_{t,k-1,k+1-m}
\\&\quad+4\sum_{0\leq t<2^{k-1}}\beta_{t,k-1,k-\ell}\beta_{t,k-1,k-m}
\\&=a_{k-1,\ell,m}+b_{k-1,\ell-2,m}+c_{k-1,\ell,m-2}
+a_{k-1,\ell-2,m-2}+4a_{k-1,\ell-1,m-1}
\\&\quad-\beta_{2^{k-1}-1,k-1,k+1-\ell}\beta_{2^{k-1}-1,k-1,k+1-m}
\\&=a_{k-1,\ell,m}+b_{k-1,\ell-2,m}+c_{k-1,\ell,m-2}
+a_{k-1,\ell-2,m-2}+4a_{k-1,\ell-1,m-1}
\\&\quad-2^{2(k-1)}\delta_{k+1,\ell}\delta_{k+1,m}
\end{align*}
Assume now that $k\geq 1$. We have
\begin{align*}
b_{k,\ell,m}&=\sum_{0\leq t<2^{k-1}}\beta_{2t,k,k-\ell}\beta_{2t+1,k,k-m}
+\sum_{0\leq t<2^{k-1}-1}\beta_{2t+1,k,k-\ell}\beta_{2t+2,k,k-m}
\\&=
\sum_{0\leq t<2^{k-1}}\bigl(\beta_{t,k-1,k-1-\ell}+\beta_{t-1,k-1,k+1-\ell}\bigr)2\beta_{t,k-1,k-m}
\\&\quad+\sum_{0\leq t<2^{k-1}-1}2\beta_{t,k-1,k-\ell}\bigl(\beta_{t+1,k-1,k-1-m}+\beta_{t,k-1,k+1-m}\bigr)
\end{align*}
Noting that $\beta_{1,k,k-m}=2\beta_{0,k-1,k-m}$, we obtain
\begin{align*}
b_{k,\ell,m}&=
2\sum_{0\leq t<2^{k-1}}\beta_{t,k-1,k-1-\ell}\beta_{t,k-1,k-m}
\\&\quad+2\sum_{0\leq t<2^{k-1}-1}\beta_{t,k-1,k+1-\ell}\beta_{t+1,k-1,k-m}
+2\sum_{0\leq t<2^{k-1}-1}\beta_{t,k-1,k-\ell}\beta_{t+1,k-1,k-1-m}
\\&\quad+2\sum_{0\leq t<2^{k-1}}\beta_{t,k-1,k-\ell}\beta_{t,k-1,k+1-m}
-2\beta_{2^{k-1}-1,k-1,k-\ell}\beta_{2^{k-1}-1,k-1,k+1-m}
\\&=2a_{k-1,\ell,m-1}+2b_{k-1,\ell-2,m-1}+2b_{k-1,\ell-1,m}+2a_{k-1,\ell-1,m-2}
-2^{2k-1}\delta_{k,\ell}\delta_{k+1,m}.
\end{align*}
By the obvious identities $a_{k,\ell,m}=a_{k,m,\ell}$ and
$b_{k,\ell,m}=c_{k,m,\ell}$ we have
\[c_{k,\ell,m}=
2a_{k-1,\ell-1,m}+2c_{k-1,\ell-1,m-2}+2c_{k-1,\ell,m-1}+2a_{k-1,\ell-2,m-1}
-2^{2k-1}\delta_{k+1,\ell}\delta_{k,m}.
\]

We define generating functions
\begin{align*}
A(x,y,z)&=\sum_{k,\ell,m\geq 0}a_{k,\ell,m}x^ky^\ell z^m\\
B(x,y,z)&=\sum_{k,\ell,m\geq 0}b_{k,\ell,m}x^ky^\ell z^m\\
C(x,y,z)&=\sum_{k,\ell,m\geq 0}c_{k,\ell,m}x^ky^\ell z^m\\
\end{align*}
Summing over $k,\ell,m$, the above recurrences translates to identities for these functions:
noting that $a_{k,\ell,m}=0$ for $\ell<0$ or $m<0$, and that
\begin{align*}\sum_{\ell,m\geq 0}a_{0,\ell,m}y^\ell z^m
&=
\sum_{\ell,m\geq 0}\beta_{0,0,-\ell}\beta_{0,0,-m}y^\ell z^m
=1,
\end{align*}
we obtain
\begin{align*}
A(x,y,z)&=1+x(1+4yz+y^2z^2)A(x,y,z)+xy^2B(x,y,z)+xz^2C(x,y,z)
\\&\quad-\frac 14\sum_{k\geq 1}4^k x^ky^{k+1}z^{k+1}\\
&=1+x(1+4yz+y^2z^2)A(x,y,z)+xy^2B(x,y,z)+xz^2C(x,y,z)
\\&\quad-\frac {yz}4\frac {4xyz}{1-4xyz}.
\end{align*}
Moreover, we have $\sum_{\ell,m\geq 0}b_{0,\ell,m}y^\ell z^m=0$, therefore
\begin{align*}
B(x,y,z)&=2xz(1+yz)A(x,y,z)+2xy(1+yz)B(x,y,z)
-\frac 12\sum_{k\geq 1}4^kx^ky^kz^{k+1}
\\&=2xz(1+yz)A(x,y,z)+2xy(1+yz)B(x,y,z)
-\frac z2\frac{4xyz}{1-4xyz}.
\end{align*}
Finally, we have
\begin{align*}
C(x,y,z)&=2xy(1+yz)A(x,y,z)+2xz(1+yz)C(x,y,z)
-\frac 12\sum_{k\geq 1}4^kx^ky^{k+1}z^k
\\&=2xy(1+yz)A(x,y,z)+2xz(1+yz)C(x,y,z)
-\frac y2\frac{4xyz}{1-4xyz}.
\end{align*}

We have
\[B(x,y,z)=\frac{2xz(1+yz)A(x,y,z)-\frac z2\frac{4xyz}{1-4xyz}}{1-2xy(1+yz)}\]
and
\[C(x,y,z)=\frac{2xy(1+yz)A(x,y,z)-\frac y2\frac{4xyz}{1-4xyz}}{1-2xz(1+yz)}.\]
Inserting these identities into the equation for $A(x,y,z)$, we obtain
\begin{multline*}
A(x,y,z)\biggl(1-x(1+4yz+y^2z^2)-xy^2\frac{2xz(1+yz)}{1-2xy(1+yz)}-xz^2\frac{2xy(1+yz)}{1-2xz(1+yz)}
\biggr)\\
=1-\frac{yz}4\frac{4xyz}{1-4xyz}-xy^2\frac{\frac z2\frac{4xyz}{1-4xyz}}{1-2xy(1+yz)}-xz^2\frac{\frac y2\frac{4xyz}{1-4xyz}}{1-2xz(1+yz)}
\end{multline*}
After some rewriting we obtain
\[
A(x,y,z)=\frac{1-\frac {xy^2z^2}{1-4xyz}\Bigl(1+\frac{2xy}{1-2xy(1+yz)}+\frac{2xz}{1-2xz(1+yz)}\Bigr)}{1-x(1+yz)^2-\frac{xyz}{1-2xy(1+yz)}-\frac{xyz}{1-2xz(1+yz)}}
\]
Note that the denominator is the same as in~\cite[Equation~(19)]{DKS2016}.

Define
\begin{align*}
a'_{k,\ell,m}&=\sum_{0\leq t<2^k}\beta_{t,k,k-\ell}\beta_{t^\comp_k,k,-k+m}\\
b'_{k,\ell,m}&=\sum_{0\leq t<2^k-1}\beta_{t,k,k-\ell}\beta_{(t+1)^\comp_k,k,-k+m}\\
c'_{k,\ell,m}&=\sum_{0\leq t<2^k-1}\beta_{t-1,k,k-\ell}\beta_{(t+1)^\comp_k,k,-k+m}\\
\end{align*}

We have for $k\geq 1$
\begin{align*}
a'_{k,\ell,m}&=
\sum_{0\leq t<2^{k-1}}\beta_{2t,k,k-\ell}\beta_{(2t)^\comp_k,k,-k+m}
+
\sum_{0\leq t<2^{k-1}}\beta_{2t+1,k,k-\ell}\beta_{(2t+1)^\comp_k,k,-k+m}
\\&=\sum_{0\leq t<2^{k-1}}\bigl(\beta_{t,k-1,k-\ell-1}+\beta_{t-1,k-1,k-\ell+1}\bigr)2\beta_{t^\comp_{k-1},k-1,-k+m}
\\&\quad+\sum_{0\leq t<2^{k-1}}2\beta_{t,k-1,k-\ell}\bigl(\beta_{t^\comp_{k-1},k-1,-k+m-1}+\beta_{(t+1)^\comp_{k-1},k-1,-k+m+1}\bigr)
\\&=2\sum_{0\leq t<2^{k-1}}\beta_{t,k-1,k-1-\ell}\beta_{t^\comp_{k-1},k-1,-(k-1)+m-1}
\\&\quad+2\sum_{0\leq t<2^{k-1}-1}\beta_{t,k-1,k-1-(\ell-2)}\beta_{(t+1)^\comp_{k-1},k-1,-(k-1)+m-1}
\\&\quad+2\sum_{0\leq t<2^{k-1}}\beta_{t,k-1,k-1-(\ell-1)}\beta_{t^\comp_{k-1},k-1,-(k-1)+m-2}
\\&\quad+2\sum_{0\leq t<2^{k-1}}\beta_{t,k-1,k-1-(\ell-1)}\beta_{(t+1)^\comp_{k-1},k-1,-(k-1)+m}
\\&=2a'_{k-1,\ell,m-1}
+2b'_{k-1,\ell-2,m-1}
+2a'_{k-1,\ell-1,m-2}
+2b'_{k-1,\ell-1,m}
\end{align*}
Moreover
\begin{align*}
b'_{k,\ell,m}
&=\sum_{0\leq t<2^{k-1}}\beta_{2t,k,k-\ell}\beta_{(2t+1)^\comp_k,k,-k+m}
\\&\quad+\sum_{0\leq t<2^{k-1}-1}\beta_{2t+1,k,k-\ell}\beta_{(2(t+1))^\comp_k,k,-k+m}
\\&=\sum_{0\leq t<2^{k-1}}
\bigl(\beta_{t,k-1,k-\ell-1}+\beta_{t-1,k-1,k-\ell+1}\bigr)
\bigl(\beta_{t^\comp_{k-1},k-1,-k+m-1}+\beta_{(t+1)^\comp_{k-1},k-1,-k+m+1}\bigr)
\\&\quad+4\sum_{0\leq t<2^{k-1}-1}\beta_{t,k-1,k-\ell}\beta_{(t+1)^\comp_{k-1},k-1,-k+m}
\\&=a'_{k-1,\ell,m-2}+b'_{k-1,\ell-2,m-2}+b'_{k-1,\ell,m}+c'_{k-1,\ell-2,m}+4b'_{k-1,\ell-1,m-1}
\end{align*}
and
\begin{align*}
c'_{k,\ell,m}&=\sum_{0\leq t<2^{k-1}}\beta_{2t-1,k,k-\ell}\beta_{(2t+1)^\comp_k,k,-k+m}
+\sum_{0\leq t<2^{k-1}}\beta_{2t,k,k-\ell}\beta_{(2(t+1))^\comp_k,k,-k+m}
\\&=2\sum_{0\leq t<2^{k-1}}\beta_{t-1,k-1,k-\ell}\bigl(\beta_{t^\comp_{k-1},k-1,-k+m-1}+\beta_{(t+1)^\comp_{k-1},k-1,-k+m+1}\bigr)
\\&\quad+2\sum_{0\leq t<2^{k-1}}\bigl(\beta_{t,k-1,k-\ell-1}+\beta_{t-1,k-1,k-\ell+1}\bigr)\beta_{(t+1)^\comp_{k-1},k-1,-k+m}
\\&=2b_{k-1,\ell-1,m-2}+2c_{k-1,\ell-1,m}+2b_{k-1,\ell,m-1}+2c_{k-1,\ell-2,m-1}
\end{align*}

We define generating functions
\begin{align*}
A'(x,y,z)&=\sum_{k,\ell,m\geq 0}a'_{k,\ell,m}x^ky^\ell z^m\\
B'(x,y,z)&=\sum_{k,\ell,m\geq 0}b'_{k,\ell,m}x^ky^\ell z^m\\
C'(x,y,z)&=\sum_{k,\ell,m\geq 0}c'_{k,\ell,m}x^ky^\ell z^m.
\end{align*}

We have $a_{k,\ell,m}=0$ for $\ell<0$ or $m<0$, and
\begin{align*}\sum_{\ell,m\geq 0}a'_{0,\ell,m}y^\ell z^m
&=
\sum_{\ell,m\geq 0}\beta_{0,0,-\ell}\beta_{0,0,-m}y^\ell z^m
=1,
\end{align*}
moreover 
\begin{align*}\sum_{\ell,m\geq 0}b'_{0,\ell,m}y^\ell z^m
&=
\sum_{\ell,m\geq 0}c'_{0,\ell,m}y^\ell z^m
=0.
\end{align*}

We obtain
\begin{equation}\label{eqn_A_B}
\begin{aligned}
A'(x,y,z)&=1+2xzA'(x,y,z)+2xy^2zB'(x,y,z)+2xyz^2A'(x,y,z)+2xyB'(x,y,z)\\
&=
1+2xz(1+yz)A'(x,y,z)+2xy(1+yz)B'(x,y,z),
\end{aligned}
\end{equation}
\begin{align*}
B'(x,y,z)&=xz^2A'(x,y,z)+(xy^2z^2+x+4xyz)B'(x,y,z)+xy^2C'(x,y,z)
\end{align*}
and
\begin{align*}
C'(x,y,z)&=2xz(1+yz)B'(x,y,z)+2xy(1+yz)C'(x,y,z).
\end{align*}

It follows that
\[A'(x,y,z)=\frac{1+2xy(1+yz)B'(x,y,z)}{1-2xz(1+yz)}\]
\[C'(x,y,z)=\frac{2xz(1+yz)B'(x,y,z)}{1-2xy(1+yz)}\]
and therefore
\begin{align*}
B'(x,y,z)&=
\biggl(xz^2\frac{2xy(1+yz)}{1-2xz(1+yz)}+x(1+4yz+y^2z^2)+xy^2\frac{2xz(1+yz)}{1-2xy(1+yz)}\biggr)B'(x,y,z)
\\&+\frac {xz^2}{1-2xz(1+yz)}.
\end{align*}
Inserting this into~\eqref{eqn_A_B}, we obtain after some elementary manipulation
\[
A'(x,y,z)=
\frac 1{1-2xz(1+yz)}\cdot
\frac{1-x(1+yz)^2-\frac{xyz}{1-2xy(1+yz)}-xyz}{1-x(1+yz)^2-\frac{xyz}{1-2xy(1+yz)}-\frac{xyz}{1-2xz(1+yz)}}.
\]


Define
\[
a''_{k,\ell,m}=\sum_{0\leq t<2^k}\beta_{t^\comp_k,k,-k+\ell}\beta_{t,k,k-m}
\]
and
\[A''(x,y,z)=\sum_{k,\ell,m\geq 0}a''_{k,\ell,m}x^ky^\ell z^m
\]
By exchanging the roles of $\ell$ and $m$ resp. $y$ and $z$ we obtain
\[
A''(x,y,z)=
\frac 1{1-2xy(1+yz)}\cdot
\frac{1-x(1+yz)^2-\frac{xyz}{1-2xz(1+yz)}-xyz}{1-x(1+yz)^2-\frac{xyz}{1-2xy(1+yz)}-\frac{xyz}{1-2xz(1+yz)}}.
\]

Finally, we define

\begin{align*}
a'''_{k,\ell,m}&=\sum_{0\leq t<2^k}\beta_{t^\comp_k,k,-k+\ell}\beta_{t^\comp_k,k,-k+m}\\
b'''_{k,\ell,m}&=\sum_{0\leq t<2^k}\beta_{t^\comp_k,k,-k+\ell}\beta_{(t+1)^\comp_k,k,-k+m}\\
c'''_{k,\ell,m}&=\sum_{0\leq t<2^k}\beta_{(t+1)^\comp_k,k,-k+\ell}\beta_{t^\comp_k,k,-k+m}
\end{align*}
and we have
\begin{align*}
a'''_{k,\ell,m}&=
\sum_{0\leq t<2^{k-1}}\beta_{(2t)^\comp_k,k,-k+\ell}\beta_{(2t)^\comp_k,k,-k+m}
+
\sum_{0\leq t<2^{k-1}}\beta_{(2t+1)^\comp_k,k,-k+\ell}\beta_{(2t+1)^\comp_k,k,-k+m}
\\&=4\sum_{0\leq t<2^{k-1}}\beta_{t^\comp_{k-1},k-1,-(k-1)+\ell-1}\beta_{t^\comp_{k-1},k-1,-(k-1)+m-1}
\\&\quad+\sum_{0\leq t<2^{k-1}}\bigl(\beta_{t^\comp_{k-1},k-1,-(k-1)+\ell-2}+\beta_{(t+1)^\comp_{k-1},k-1,-(k-1)+\ell}\bigr)
\\&\qquad\times\bigl(\beta_{t^\comp_{k-1},k-1,-(k-1)+m-2}+\beta_{(t+1)^\comp_{k-1},k-1,-(k-1)+m}\bigr)
\\&=4a'''_{k-1,\ell-1,m-1}+a'''_{k-1,\ell-2,m-2}+b'''_{k-1,\ell-2,m}+c'''_{k-1,\ell,m-2}+a'''_{k-1,\ell,m}
\\&\quad-\beta_{0^\comp_{k-1},k-1,-(k-1)+\ell}\beta_{0^\comp_{k-1},k-1,-(k-1)+m}
\\&=4a'''_{k-1,\ell-1,m-1}+a'''_{k-1,\ell-2,m-2}+b'''_{k-1,\ell-2,m}+c'''_{k-1,\ell,m-2}+a'''_{k-1,\ell,m}
\\&\quad-2^{2(k-1)}\delta_{k,\ell+1}\delta_{k,m+1}
\end{align*}

\begin{align*}
b'''_{k,\ell,m}&=
\sum_{0\leq t<2^{k-1}}\beta_{(2t)^\comp_k,k,-k+\ell}\beta_{(2t+1)^\comp_k,k,-k+m}
+
\sum_{0\leq t<2^{k-1}}\beta_{(2t+1)^\comp_k,k,-k+\ell}\beta_{(2(t+1))^\comp_k,k,-k+m}
\\&=
2\sum_{0\leq t<2^{k-1}}\beta_{t^\comp_{k-1},k-1,-(k-1)+\ell-1}\bigl(\beta_{t^\comp_{k-1},k-1,-(k-1)+m-2}+\beta_{(t+1)^\comp_{k-1},k-1,-(k-1)+m}\bigr)
\\&\quad+2\sum_{0\leq t<2^{k-1}}\bigl(\beta_{t^\comp_{k-1},k-1,-(k-1)+\ell-2}+\beta_{(t+1)^\comp_{k-1},k-1,-(k-1)+\ell}\bigr)\beta_{(t+1)^\comp_{k-1},k-1,-(k-1)+m-1}
\\&=2a'''_{k-1,\ell-1,m-2}+2b'''_{k-1,\ell-1,m}+2b'''_{k-1,\ell-2,m-1}+2a'''_{k-1,\ell,m-1}
\\&\quad-2\beta_{0^\comp_{k-1},k-1,-k+\ell+1}\beta_{0^\comp_{k-1},k-1,-k+m}
\\&=2a'''_{k-1,\ell-1,m-2}+2b'''_{k-1,\ell-1,m}+2b'''_{k-1,\ell-2,m-1}+2a'''_{k-1\ell,m-1}-2^{2k-1}\delta_{k,\ell+1}\delta_{k,m}
\end{align*}
and

\begin{align*}
c'''_{k,\ell,m}&=b'''_{k,m,\ell}
\\&=2a'''_{k-1,m-1,\ell-2}+2b'''_{k-1,m-1,\ell}+2b'''_{k-1,m-2,\ell-1}+2a'''_{k-1,m,\ell-1}-2^{2k-1}\delta_{k,m+1}\delta_{k,\ell}
\\&=2a'''_{k-1,\ell-2,m-1}+2c'''_{k-1,\ell,m-1}+2c'''_{k-1,\ell-1,m-2}+2a'''_{k-1,\ell-1,m}-2^{2k-1}\delta_{k,\ell}\delta_{k,m+1}.
\end{align*}
Again we translate this to generating functions.
We note that
\[\sum_{\ell,m\geq 0}a'''_{0,\ell,m}y^\ell z^m=
\sum_{\ell,m\geq 0}\beta_{0,0,-\ell}\beta_{0,0,m}=
\sum_{\ell,m\geq 0}\delta_{\ell,0}\delta_{m,0}=1\]
and that
\[\sum_{\ell,m\geq 0}b'''_{0,\ell,m}y^\ell z^m=
\sum_{\ell,m\geq 0}c'''_{0,\ell,m}y^\ell z^m=0.\]
Therefore
\begin{align*}A'''(x,y,z)&=
1+x(4yz+y^2z^2+1)A'''(x,y,z)+xy^2B(x,y,z)+xz^2C(x,y,z)
\\&\quad-\sum_{k\geq 1}4^{k-1}x^ky^{k-1}z^{k-1}
\\&=
1+x(4yz+y^2z^2+1)A'''(x,y,z)+xy^2B(x,y,z)+xz^2C(x,y,z)
\\&\quad-\frac x{1-4xyz}
\end{align*}
and
\begin{align*}
B'''(x,y,z)&=2xz(1+yz)A'''(x,y,z)+2xy(1+yz)B'''(x,y,z)-
2\sum_{k\geq 1}4^{k-1}x^ky^{k-1}z^k
\\&=
2xz(1+yz)A'''(x,y,z)+2xy(1+yz)B'''(x,y,z)-\frac{2xz}{1-4xyz}
\end{align*}
\[C'''(x,y,z)=2xy(1+yz)A'''(x,y,z)+2xz(1+yz)C'''(x,y,z)-\frac{2xy}{1-4xyz}.\]

It follows that
\[B'''(x,y,z)=\frac{2xz(1+yz)A'''(x,y,z)-\frac{2xz}{1-4xyz}}{1-2xy(1+yz)}\]
and
\[C'''(x,y,z)=\frac{2xy(1+yz)A'''(x,y,z)-\frac{2xy}{1-4xyz}}{1-2xz(1+yz)}\]
and therefore
\begin{align*}
A'''(x,y,z)&\biggl(1-x(1+4yz+y^2z^2)-xy^2\frac{2xz(1+yz)}{1-2xy(1+yz)}-xz^2\frac{2xy(1+yz)}{1-2xz(1+yz)}\biggr)
\\&=1-\frac{x}{1-4xyz}-xy^2\frac{\frac{2xz}{1-4xyz}}{1-2xy(1+yz)}-xz^2\frac{\frac{2xy}{1-4xyz}}{1-2xz(1+yz)}.
\end{align*}
It follows that
\[
A'''(x,y,z)=\frac{1-\frac{x}{1-4xyz}\Bigl(1+\frac{2xy^2z}{1-2xy(1+yz)}+\frac{2xyz^2}{1-2xz(1+yz)}\Bigr)}{1-x(1+yz)^2-\frac{xyz}{1-2xy(1+yz)}-\frac{xyz}{1-2xz(1+yz)}}.
\]

We have
\begin{align*}
M^{(2)}_{k,\ell,m}&=\sum_{\substack{i\leq \ell\\j\leq m}}\sum_{0\leq t<2^k}
\gamma_{t,k,k-i}\gamma_{t,k,k-j}\\
&=\sum_{\substack{i\leq \ell\\j\leq m}}\bigl(a_{k,i,j}+a'_{k,i,j}+a''_{k,i,j}+a'''_{k,i,j}\bigr)\\
&=\bigl[x^ky^\ell z^m\bigr]\frac 1{1-y}\frac 1{1-z}\bigl(A+A'+A''+A'''\bigr)(x,y,z).
\qedhere
\end{align*}
\end{proof}
\subsection{Proof of Proposition~\ref{prp_diagonal}}
We write
\[F(x,y,z)=\frac 1{(1-y)(1-z)}\frac{G(x,y,z)}{D(x,y,z)},\]
where
\[G=B+B'+B''+B''',\]
\begin{align*}B(x,y,z)&=1-\frac {xy^2z^2}{1-4xyz}\left(1+\frac{2xy}{1-2xy(1+yz)}+\frac{2xz}{1-2xz(1+yz)}\right)\\
B'(x,y,z)&=
\frac {1-x(1+yz)^2-\frac{xyz}{1-2xy(1+yz)}-xyz}{1-2xz(1+yz)}\\
B''(x,y,z)&=
\frac {1-x(1+yz)^2-\frac{xyz}{1-2xz(1+yz)}-xyz}{1-2xy(1+yz)}\\
B'''(x,y,z)&=1-\frac{x}{1-4xyz}\left(1+\frac{2xy^2z}{1-2xy(1+yz)}+\frac{2xyz^2}{1-2xz(1+yz)}\right)
\end{align*}
and
\[D(x,y,z)=1-x(1+yz)^2-\frac{xyz}{1-2xy(1+yz)}-\frac{xyz}{1-2xz(1+yz)}.\]
The proof is analogous to~\cite[Proposition~10]{DKS2016}.
We copy the following two lemmata.
(We denote the open disk with radius $\delta$ around $a\in\dC$ by $B_\delta(a)$.)
\begin{lemma}\label{lem:xexp}
There exist $\delta ,\delta_1,\varepsilon>0 $ and a unique smooth function $f:B_\delta(1)\times B_\delta(1)\rightarrow\dC$ such that $f(1,1) = 1/8$ and
\[
D(f(y,z),y,z) = 0
\]
for $|y-1|<\delta$ and $|z-1|<\delta$, such that
\begin{equation}\label{eqxexp}
[x^n]\, F(x,y,z) = \frac 1{(1-y)(1-z)}\left( \frac{-G(f(y,z),y,z)}{D_x(f(y,z),y,z)} f(y,z)^{-n-1} + O\bigl(8^{(1-\varepsilon)n}\bigr) \right)
\end{equation}
uniformly for $|y-1|<\delta$ and $|z-1|<\delta$, and such that 
\begin{equation}\label{eqxexp2}
[x^n]\, F(x,y,z) = \LandauO(8^{(1-\varepsilon)n}) 
\end{equation}
uniformly for all $y,z$ satisfying
$|y| \leq 1 + \delta_1$, $|z| \leq 1 + \delta_1$ and 
$(|y-1| \geq \delta$ or $|z-1| \ge \delta)$.
Furthermore, we have the local expansions
\begin{align*}
f(y,z) &= \frac 18-\frac 18(y-1)-\frac 18(z-1)+\frac 3{32}(y-1)^2+\frac 3{32}(z-1)^2+\frac 18(y-1)(z-1)\\
&\quad-\frac 1{16}(y-1)^3-\frac 1{16}(z-1)^3-\frac 3{32}(y-1)^2(z-1)-\frac 3{32}(y-1)(z-1)^2\\
&\quad+\frac 5{128}(y-1)^4+\frac 5{128}(z-1)^4+\frac 1{16}(y-1)^3(z-1)+\frac 1{16}(y-1)(z-1)^3\\
&\quad+\frac {13}{192}(y-1)^2(z-1)^2+O\bigl(\abs{y-1}^5+\abs{z-1}^5\bigr)
\end{align*}
and
\begin{align*}
\log f(y,z) &= -\log 8 - (y-1)-(z-1) + \frac 14(y-1)^2 + \frac 14(z-1)^2 \\
&\quad-\frac 1{12}(y-1)^3 - \frac 1{12}(z-1)^3
+\frac 1{32}(y-1)^4+\frac 1{32}(z-1)^4\\
&\quad-\frac 1{48}(y-1)^2(z-1)^2
+ O\bigl(\abs{y-1}^5+\abs{z-1}^5\bigr)
\end{align*}
 at $(1,1)\in\dC^2$.
\end{lemma}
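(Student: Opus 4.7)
The plan is to follow the standard singularity-analysis strategy for multivariate rational generating functions, adapted from the proof of~\cite[Proposition~10]{DKS2016}. The strategy has three ingredients: locate the dominant singularity of $F(x,y,z)$ in the $x$-direction via the implicit function theorem; extract the principal part by contour deformation to get the main asymptotic term; and control the error terms uniformly in $(y,z)$.

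First I verify that $(x,y,z)=(1/8,1,1)$ lies on the variety $\{D=0\}$ and is regular in the $x$-direction. A direct specialization gives $D(x,1,1)=1-4x-\tfrac{2x}{1-4x}$, which vanishes at $x=1/8$, and $D_x(1/8,1,1)=-12\neq 0$. The holomorphic implicit function theorem then yields $\delta>0$ and a unique analytic function $f:B_\delta(1)\times B_\delta(1)\to \dC$ with $f(1,1)=1/8$ and $D(f(y,z),y,z)=0$. For $(y,z)$ in this polydisk I need to argue that $x\mapsto F(x,y,z)$ has exactly one pole in a disk $\{|x|<1/8+\eta\}$, which is the simple pole at $f(y,z)$. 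The auxiliary factors $1-4xyz$, $1-2xy(1+yz)$, $1-2xz(1+yz)$ appearing inside $G$ vanish near $x=1/4$ when $(y,z)$ is near $(1,1)$, hence are harmless after shrinking $\delta$; the only remaining question is whether $D(x,y,z)=0$ admits other $x$-roots entering the disk. By continuity and compactness, on a small enough polydisk no further root of $D$ can enter a disk $\{|x|\le 1/8+\eta\}$ for some $\eta>0$.

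Next I extract the pole contribution via residue calculus: for fixed $(y,z)$ in the small polydisk, shifting the contour $|x|=1/8-\eta'$ to $|x|=1/8+\eta$ and picking up the residue at $x=f(y,z)$ yields
\[[x^n]F(x,y,z)=\frac{1}{(1-y)(1-z)}\Bigl(\frac{-G(f(y,z),y,z)}{D_x(f(y,z),y,z)}\,f(y,z)^{-n-1}+O\bigl((1/8+\eta)^{-n}\bigr)\Bigr),\]
uniformly in $(y,z)$, which gives~\eqref{eqxexp} with $\varepsilon=-\log_8(1/8+\eta)>0$. For the second estimate~\eqref{eqxexp2}, I need to rule out any pole of $F$ inside $\{|x|\le 1/8+\eta\}$ when $|y-1|\ge\delta$ or $|z-1|\ge\delta$ but $|y|,|z|\le 1+\delta_1$. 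This is shown by a continuity/compactness argument: the closed region $\{(x,y,z):|x|\le 1/8+\eta,\,|y|,|z|\le 1+\delta_1,\,(|y-1|\ge\delta \text{ or }|z-1|\ge\delta)\}$ is compact, and by shrinking $\eta$ and $\delta_1$ the only candidates for zeros of $D$ or of the auxiliary factors are driven away from this region. This compactness-type argument is the main technical obstacle since one must also handle the spurious singularities from $1-4xyz$, $1-2xy(1+yz)$, $1-2xz(1+yz)$; the point is that for real positive $y,z$ near $1$ these give $|x|$ close to $1/4$, and an analytic perturbation keeps them bounded away from $1/8$.

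Finally, the local expansions of $f$ and $\log f$ are obtained by implicit differentiation of $D(f(y,z),y,z)=0$ at $(y,z)=(1,1)$. Setting $u=y-1$, $v=z-1$ and writing $f=\tfrac18+\sum c_{ij}u^iv^j$, one solves the resulting triangular system coefficient by coefficient up to order four; by the symmetry $D(x,y,z)=D(x,z,y)$ we have $c_{ij}=c_{ji}$, cutting the work in half. The expansion of $\log f$ follows by composing with $\log$. The computation is tedious but elementary and is best automated; the \texttt{gfun} package used elsewhere in the paper can verify each Taylor coefficient.
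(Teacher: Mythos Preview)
The paper does not actually prove this lemma: it states that the proof of Proposition~\ref{prp_diagonal} is analogous to~\cite[Proposition~10]{DKS2016} and then says ``We copy the following two lemmata,'' giving Lemma~\ref{lem:xexp} (and Lemma~\ref{lem:integral}) without proof. The denominator $D$ is noted earlier to be identical to the one in~\cite[Equation~(19)]{DKS2016}, so the lemma and its proof are literally imported from that reference. Your sketch follows precisely the standard argument that lives there: implicit function theorem at $(1/8,1,1)$ (your check $D(1/8,1,1)=0$, $D_x(1/8,1,1)=-12$ is correct), residue extraction for the simple pole at $x=f(y,z)$, a compactness argument to bound the remaining poles away from $|x|\le 1/8+\eta$, and symbolic Taylor expansion for $f$ and $\log f$. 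So your approach is the same as the paper's (by reference).

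Two small remarks. First, your formula $\varepsilon=-\log_8(1/8+\eta)$ is a slip; the correct relation from $(1/8+\eta)^{-n}=8^{(1-\varepsilon)n}$ is $\varepsilon=1+\log_8(1/8+\eta)$, which is indeed positive and small for small $\eta$. Second, the compactness argument for~\eqref{eqxexp2} is the only place that needs real work beyond what you wrote: one must show that on the compact set $\{|y|,|z|\le 1+\delta_1\}\setminus\{|y-1|<\delta,|z-1|<\delta\}$ the polynomial (after clearing denominators) $D(x,y,z)\cdot(1-2xy(1+yz))(1-2xz(1+yz))$ has \emph{no} $x$-root with $|x|\le 1/8+\eta$, not merely that roots stay bounded away ``by continuity.'' This is done in~\cite{DKS2016} by an explicit study of the real locus and a Rouch\'e-type argument; your sketch correctly identifies this as the main technical point but does not carry it out.
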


The next lemma will be needed for computing the asymptotic expansion of the coefficients of $y^n z^n$. It summarizes results on the normal distribution.
\begin{lemma}\label{lem:integral}
We have 
\[
\int_{-\infty, \Im(s)> 0}^\infty e^{-s^2/4} \frac{\mathrm d s}s = - \pi i,
\]
and for $k\geq 0$
\[
\int_{-\infty}^\infty e^{-s^2/4}s^k\mathrm d s
=\begin{cases}
2\sqrt{\pi}\frac{k!}{(k/2)!} ,&k \mbox{ even},\\
0,&k \mbox{ odd.}
\end{cases}
\]
\end{lemma}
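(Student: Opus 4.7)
The plan is to establish the two identities separately by elementary contour-integration and a substitution.

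For the first identity, the contour differs from the real axis only by a small semicircular detour above the origin, so the natural approach is to split it as a principal value plus a semicircular arc. The integrand $e^{-s^2/4}/s$ is odd on $\dR\setminus\{0\}$, hence its principal value along the real line vanishes. Parametrizing the detour around $0$ in the upper half plane as $s=\varepsilon e^{i\theta}$ with $\theta$ decreasing from $\pi$ to $0$, and using $e^{-s^2/4}\to 1$ as $\varepsilon\to 0$, the semicircular contribution limits to $\int_\pi^0 i\,\mathrm d\theta = -i\pi$ (equivalently, $-i\pi$ times the residue $1$ of $1/s$ at $0$, the classical Sokhotski--Plemelj contribution). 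Adding the two contributions yields $-\pi i$, as claimed.

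For the second identity, the case of odd $k$ is immediate from the odd symmetry of the integrand. For even $k=2m$, the plan is to exploit the evenness in $s$ to restrict to the positive half-line and then substitute $u=s^2/4$, so that $s = 2\sqrt u$, $\mathrm d s = u^{-1/2}\,\mathrm d u$ and $s^{2m}=(4u)^m$. The integral becomes
\[
\int_{-\infty}^\infty e^{-s^2/4}s^{2m}\,\mathrm d s
= 2\cdot 4^m\int_0^\infty e^{-u}u^{m-1/2}\,\mathrm d u
= 2\cdot 4^m\,\Gamma\bigl(m+\tfrac 12\bigr).
\]
I would then invoke the standard half-integer gamma identity $\Gamma(m+\tfrac 12)=\sqrt{\pi}\,(2m)!/(4^m m!)$ to simplify to $2\sqrt{\pi}\,(2m)!/m! = 2\sqrt{\pi}\,k!/(k/2)!$.

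Neither step presents a genuine obstacle: the only point requiring care is the justification of the $-\pi i$ contribution of the semicircular detour, which is the standard Sokhotski--Plemelj argument, and the half-integer gamma identity, which follows from the functional equation together with $\Gamma(\tfrac 12)=\sqrt{\pi}$.
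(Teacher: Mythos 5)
Your proposal is correct: the principal value of the odd integrand vanishes, the clockwise semicircular detour contributes $-i\pi$ times the residue $1$, and the substitution $u=s^2/4$ together with $\Gamma(m+\tfrac 12)=\sqrt{\pi}\,(2m)!/(4^m m!)$ gives exactly $2\sqrt{\pi}\,k!/(k/2)!$ for even $k$ (and oddness handles odd $k$). The paper itself offers no proof of this lemma — it is quoted as a collection of standard facts about the normal distribution from the earlier Drmota--Kauers--Spiegelhofer paper — so your argument simply supplies the standard details and there is nothing to contrast it with.
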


\newcommand*{\cosc}{A_n}

We begin by determining the coefficient $[y^{n-1} z^{n-1}]$ using Cauchy integration,
\[
[x^n y^{n-1} z^{n-1}]\, F(x,y,z) = \frac 1{(2\pi i)^2} \iint\limits_{\gamma\times \gamma} 
[x^n]\, F(x,y,z) \frac{\mathrm dy}{y^n} \frac{\mathrm dz}{z^n},
\]	
where the contour of integration $\gamma$ consists of two pieces:
a part $\gamma_1$ inside the disk of radius $\delta$ around $1$, which connects the points $1 \pm i\delta$ and passes $1$ on the left hand side, and a part $\gamma_2$, which is just a circular arc around $0$ connecting the points $1\pm i\delta$, see Figure~\ref{fig:intpath}.

By~\eqref{eqxexp} and~\eqref{eqxexp2} the integral along $\gamma_2$, is of order $\LandauO(8^{(1-\varepsilon)n})$ which will turn out to be exponentially smaller than the main part arising from the integral along $\gamma_1$.
Therefore we may replace $\gamma$ by $\gamma_1$, obtaining
\begin{multline*}
[x^n y^{n-1} z^{n-1}]\, F(x,y,z)
= O\bigl(8^{(1-\varepsilon)n}\bigr)\\
+ \frac 1{(2\pi i)^2} \iint\limits_{\gamma_1\times \gamma_1}
\frac 1{(1-y)(1-z)} \frac{-yz\,G(f(y,z),y,z)}{D_x(f(y,z),y,z)} \bigl(f(y,z)yz\bigr)^{-n-1} \mathrm dy\,\mathrm dz.
\end{multline*}
For $y,z\in \gamma_1$ we set 
\[
y = 1 + i \frac s{\sqrt n} \quad \mbox{and}\quad z = 1 + i \frac t{\sqrt n}
\]
and obtain after this substitution
\[
[x^n y^{n-1} z^{n-1}]\, F(x,y,z) = \frac 1{(2\pi i)^2} \iint\limits_{\substack{|s|,|t|\le \delta \sqrt n,\\ \Im(s),\Im(t)> 0}} 
P_n(s,t) e^{-(n+1)\, g_n(s,t)} \frac{\mathrm ds\,\mathrm dt}{st} + O\bigl(8^{(1-\varepsilon)n}\bigr),
\]
where 
\[
P_n(s,t) = \left.\frac{-yz\,G(f(y,z),y,z)}{D_x(f(y,z),y,z)}\right|_{y = 1 + i s/{\sqrt n},\, z = 1 + i t/{\sqrt n} }
\]
and
\[
g_n(s,t) = \left.\left( \log f(y,z) + \log y + \log z \right)\right|_{y = 1 + i s/{\sqrt n},\, z = 1 + i t/{\sqrt n} }.
\]
Using the Taylor expansion of $f(x,y)$ and a computer algebra system, we obtain
\begin{align*}
\frac{-yz\,G(f(y,z),y,z)}{D_x(f(y,z),y,z)} &=
\frac 18 - \frac 1{32}(y-1)^2 - \frac 1{32}(z-1)^2+O\bigl(\abs{y-1}^3+\abs{z-1}^3\bigr),
\end{align*}
from which it follows that
\[
P_n(s,t) = \frac 18\left(1 +\frac{s^2}{4n}+\frac{t^2}{4n}
+O\left( \frac{|s|^3+|t|^3}{n^{3/2}} \right)\right).
\]

Lemma~\ref{lem:xexp} implies
\begin{align*}
\log f(y,z) + \log y + \log z &= - \log 8 - \frac 14 (y-1)^2 - \frac 14 (z-1)^2+ \frac 1{4} (y-1)^3 + \frac 1{4} (z-1)^3 \\
&\quad-\frac 7{32}(y-1)^4-\frac 7{32}(y-1)^4-\frac 1{48}(y-1)^2(z-1)^2\\
&\quad+ O\bigl(|y-1|^5+|z-1|^5\bigr), 
\end{align*}
so that 
\begin{align}
\label{eq:ng}
\begin{aligned}
-(n+1)\, g_n(s,t)
&= \log 8^{n+1} - \frac{s^2}4 - \frac{t^2}4 + i \frac{s^3}{4 \sqrt n} + i \frac{t^3}{4 \sqrt n}-\frac{s^2}{4n}-\frac{t^2}{4n}\\
&\quad+ \frac {7s^4}{32n} +\frac {7t^4}{32n}+\frac{s^2t^2}{48n}+
O\left( \frac{|s|^5+|t|^5}{n^{3/2}} \right).
\end{aligned}
\end{align}
As a next step we want to use the expansion $e^x=1+x+x^2/2+\LandauO(x^3)$ for $x=o(1)$ on the part involving exponents in $s$ and $t$ of order $3$ and higher.  
Therefore we need to split the contour $\gamma_1$ into $3$ parts.
(Remark. At this point the argument in~\cite{DKS2016} is incomplete, but can be repaired in the same way.)

For their definition we need to choose a sequence $\cosc$ such that $\cosc = o(n^{-1/3})$ and $\cosc = \omega(n^{-1/2})$. Thus, we choose $\cosc = n^{-1/2+\nu}$ for $0 < \nu < 1/6$.
Then we define a part $\gamma_{1,1}$ which connects the points $1 \pm i\delta \cosc$ inside the disc of radius $\delta\cosc$ around $1$ and passes $1$ on the left hand side, a part $\gamma_{1,2}$ which connects $1 +i\delta \cosc$ and $1 + i\delta$ by a straight line, and a symmetric part $\gamma_{1,3}$ that connects $1-i\delta \cosc$ and $1-i \delta$ by a straight line, see Figure~\ref{fig:intpath}.

\begin{figure}
	\centering
	\label{fig:intpath}
	\includegraphics[width=4cm]{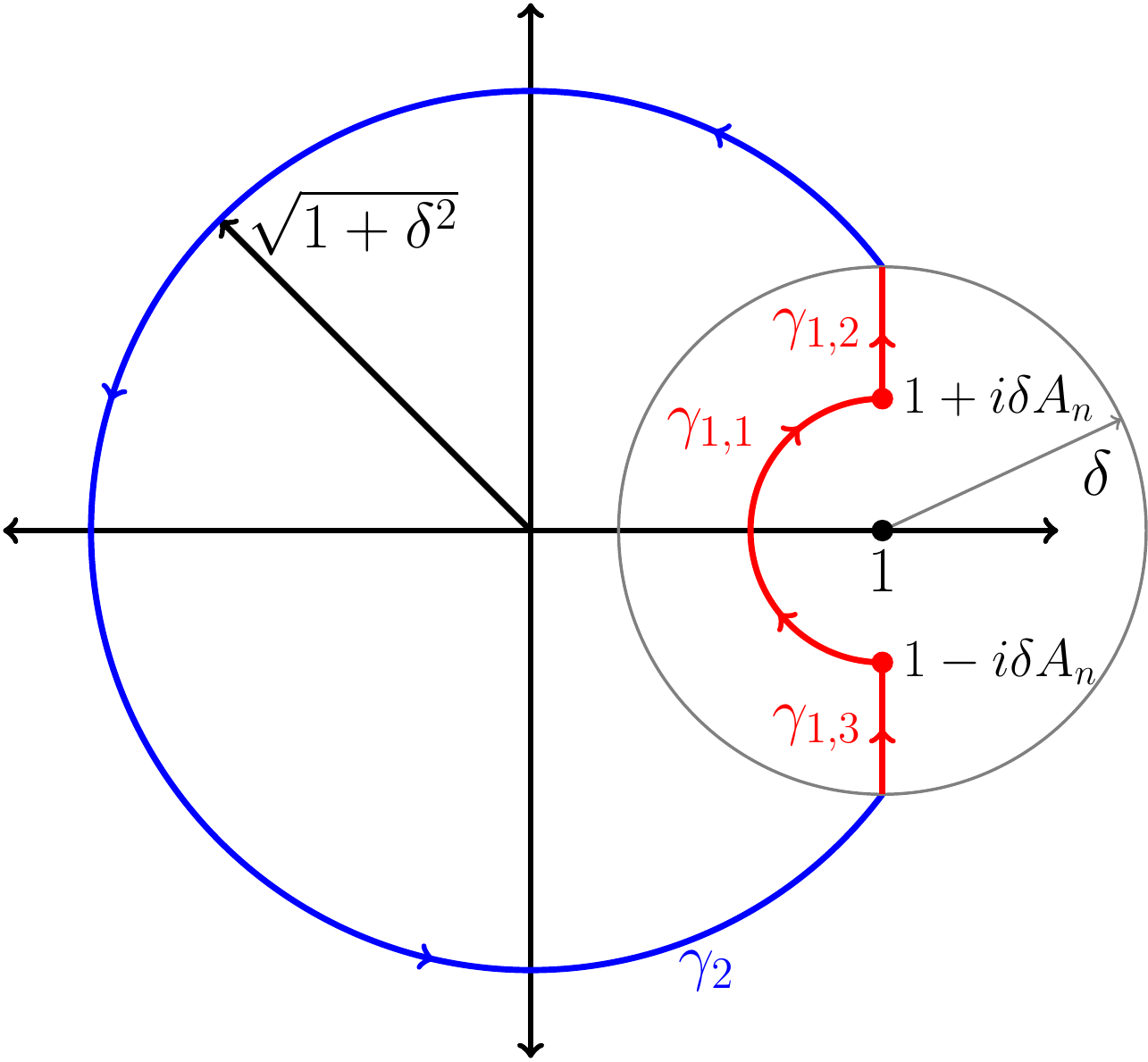}
	\caption{The path of integration used in Lemma~\ref{lem:integral}. The contour consists of two main parts $\gamma_1$ and $\gamma_2$, where $\gamma_1$ is split into $3$ smaller parts $\gamma_{1,1}, \gamma_{1,2}$, and $\gamma_{1,3}$. The asymptotic main contribution arises at $\gamma_{1,1}$. The constant $\delta$ is defined in Lemma~\ref{lem:xexp} and $A_n = n^{-1/2 + \nu}$ with $0<\nu<1/6$.}
\end{figure}

Due to~\eqref{eq:ng} we get the bound
\begin{align*}
	\Re \left(-(n+1)g_n(s,t)\right) &\leq \log(8^{n+1}) - \frac{s^2}{3} - \frac{t^2}{3},
\end{align*}
for large enough $n$. Hence, the integral along $\gamma_{1,2}$ (and also $\gamma_{1,3}$) is negligible as
\begin{align*}
	\int\limits_{\delta n^{\nu} \leq |s|,|t| \leq \delta \sqrt{n}} e^{-(n+1)g_n(s,t)} \, ds \, dt = o\left(8^n e^{-\frac{n^{2\nu}}{3}} \right).
\end{align*}
The lower bound is computed as $A_n\sqrt{n} = n^\nu$, where the choice of $A_n$ is crucial. 

What remains is to treat the integral along $\gamma_{1,1}$. On this part we may use the expansion of $e^x$ to obtain
\begin{align*}
e^{-(n+1)\, g_n(s,t) } &= 8^{n+1} e^{- \frac{s^2}4 - \frac{t^2}4 } 
\left( 1 -\frac{s^2+t^2}{4n}
+i\frac{s^3+t^3}{4\sqrt{n}}
+\frac{7(s^4+t^4)}{32n}
+\frac{s^2t^2}{48n}
\right.
\\&
\left.
-\frac{s^6+t^6}{32n}
-\frac{s^3t^3}{16n}
+O\left( \frac{|s|^5+|s|^7+|t|^5+|t|^7}{n^{3/2}}\right)
\right)
\end{align*}
for $|s|\leq n^{\nu}$ and $|t|\leq \delta n^{\nu}$.
This leads to
\begin{align*}
\hspace{5em}&\hspace{-5em} \frac 1{(2\pi i)^2} \iint\limits_{|s|,|t|\le \delta n^{\nu}, \Im(s),\Im(t)> 0} 
P_n(s,t) e^{-(n+1)\, g_n(s,t)} \frac{\mathrm ds\,\mathrm dt}{st}\\
&= \frac {8^n}{(2\pi i)^2} \iint\limits_{|s|,|t|\le \delta n^{\nu}, \Im(s),\Im(t)> 0}
e^{- \frac{s^2}4 - \frac{t^2}4 }
\left(1+\frac{is^3+it^3}{4\sqrt{n}}
+\frac{7s^4+7t^4}{32n}
+\frac{s^2t^2}{48n}
\right.
\\&
\left.
-\frac{s^6+t^6}{32n}
-\frac{s^3t^3}{16n}
\right)
\frac{\mathrm ds\,\mathrm dt}{st} + O\left( \frac{8^n}{n^{3/2}} \right) \\
&= \frac {8^n}{(2\pi i)^2} \iint\limits_{-\infty <s,t< \infty, \Im(s),\Im(t)> 0}
e^{-\frac{s^2}4 - \frac{t^2}4 }
\left(1+\frac{is^3+it^3}{4\sqrt{n}}
+\frac{7s^4+7t^4}{32n}
+\frac{s^2t^2}{48n}
\right.
\\&
\left.
-\frac{s^6+t^6}{32n}
-\frac{s^3t^3}{16n}
\right)
\frac{\mathrm ds\,\mathrm dt}{st} + O\left( \frac{8^n}{n^{3/2}} \right).
\end{align*}
Finally by writing this as a sum of products of integrals and applying Lemma~\ref{lem:integral} term by term this expression equals
\begin{align*}
&= 8^{n}
\left(\frac 14-\frac 1{2\sqrt{\pi n}}+\frac 1{4\pi n}+\LandauO(n^{-3/2})\right).
\end{align*}
Summing up we arrive at the asymptotics
\[
\frac 1{8^n}[x^n y^{n-1} z^{n-1}]\, F(x,y,z)
=\frac 14 - \frac 1{2\sqrt{\pi n}} + \frac 1{4\pi n}+\LandauO(n^{-3/2}).
\]
By extending the above argument, which is only a computational issue, we obtain more terms in the asymptotic expansion, which yields the statement of Proposition~\ref{prp_diagonal}. For details see the accompanying Maple worksheet~\cite{web}.

\section{Conclusion}
It is an elementary problem to study the behaviour of the digital expansion of an integer under addition of a constant.
More specifically, we wish to understand the \emph{sum of digits} in base $2$ of $n$ and $n+t$, which amounts to study the number of \emph{carries} occurring in the addition of the binary expansions of $n$ and $t$.
The question arises how often a certain number of carries is attained when adding $n$ to a given integer $t$.
At first, this has the appearance of an easy task.
However, we soon meet the difficulty that carries may propagate through several blocks of $1$s; it is not clear how to capture all of the appearing patterns simultaneously.
Both Conjecture~\ref{conj:TD} and Conjecture~\ref{conj:C} concern this question,
and neither of them could be solved for the past seven years since their introduction.
Only partial results have been obtained so far, including an almost-all result for Cusick's conjecture proved by Drmota, Kauers, and the first author.
The current paper adds to our knowledge on the Tu--Deng conjecture by proving an analogous result: Conjecture~\ref{conj:TD} holds almost surely in a precise sense.

Our method certainly can be applied to related questions.
While analoga of~\eqref{eqn_TD_equiv} and~\eqref{eqn_C_equiv} fail for the sum-of-digits function in base $3$, they seem to hold for the Hamming weight of the ternary expansion of $n$ (the number of nonzero digits of $n$ in base $3$).
We are confident that our method yields almost-all results for these questions.

A different kind of extension of the considered problems concerns the sum of digits of $n$, $n+t$ and $n+2t$:
do we have $\lvert \{n\in\{0,\ldots,2^k-1\}: s(n)\leq s(n+t), s(n)\leq s(n+2t)\}\rvert>2^{k-2}$?
Is the same true for $\oplus_k$ instead of $+$?
Again, we expect that nontrivial results can be obtained using our method.

Meanwhile, the full statement of Conjecture~\ref{conj:TD} remains an open problem.
One possible approach to proving it is to assume a hypothetical counterexample to the conjecture, and from it construct a large set of counterexamples, which would contradict the asymptotical statement of our main theorem.
However, it is a nontrivial task to compare the values $P_{t,k}$ for different $t$, in particular to construct (many) integers $t'$ and $k'$ satisfying $P_{t',k'}\geq P_{t,k}$. It follows that this approach cannot yet be used to prove the conjecture.

In a similar vein, we may consider the following approach to proving Conjecture~\ref{conj:C}:
we have numerically $c_{t'}\leq c_t$, where $t'$ is obtained by appending $01\cdots 1$ to the binary expansion and the number of $1$s is large enough.
If this can be proved, we may iterate the procedure of appending $01\cdots 1$, obtaining $t^{(k)}$; moreover, by asymptotic considerations one can certainly prove that $c_{t^{(k)}}>1/2$ for $k$ large enough. By monotonicity, we obtain $c_t>1/2$.
Again, the problem to overcome is the comparison of values of $c_t$ for different $t$, which seems to be difficult.

\medskip

\textbf{Acknowledgments:}
\label{sec:ack}
The first author acknowledges support by the project MuDeRa (Multiplicativity, Determinism, and Randomness),
which is a joint project between the
ANR (Agence Nationale de la Recherche, ANR-14-CE34-0009) and the FWF (Austrian Science Fund, I-1751-N26),
and also by project F5502-N26 (FWF),
which is a part of the Special Research Program
``Quasi Monte Carlo Methods: Theory and Applications''.
The second was supported by the SFB project F50-03 ``Algorithmic and Enumerative Combinatorics''.
Last but not least, we thank Michael Drmota for many insightful discussions, 
and the referees for the detailed feedback and enthusiasm!

\bibliographystyle{siam}
\bibliography{uhu}
\end{document}